\documentclass{amsart}

\usepackage[utf8]{inputenc}
\usepackage{hyperref}
\hypersetup{colorlinks=true,
	linkcolor=black, % enlaces a partes del documento (p.e. índice) en color negro
	urlcolor=blue} % enlaces a recursos fuera del documento en azul
\usepackage{amsmath,amssymb,amsfonts,amsthm}
\usepackage{amsthm}
\usepackage{enumerate}
\numberwithin{equation}{section}
\usepackage{marginnote}
\usepackage[a4paper]{geometry}
\usepackage{pdfrender,xcolor}

\reversemarginpar

\usepackage[english]{babel}
\usepackage[babel, english=american]{csquotes}
%\usepackage{showkeys}
%-----------------------------
%   ESTILOS DE TEOREMAS
\newtheoremstyle{Teorema}% name
{3pt}% Space above
{3pt}% Space below
{\slshape}% Body font
{}% Indent amount
{\bfseries}% Theorem head font
{:}% Punctuation after theorem head
{\newline}% Space after theorem head
{}% Theorem head spec (can be left empty, meaning `normal')

\newtheorem{theorem}{Theorem}[section]
\newtheorem{definition}{Definition}

\newtheorem{corollary}[theorem]{Corollary}

\newtheorem{lemma}[theorem]{Lemma}

\theoremstyle{definition}

%-----------------------------
%      OPERADORES MATEMATICOS

\DeclareMathOperator{\D}{d}
\DeclareMathOperator{\supp}{supp}
\DeclareMathOperator{\Co}{Co}

%------------------------------

%-------------------------------
%      NEW COMMANDS

\newcommand{\Z}{\mathbb{Z}}
\newcommand{\R}{\mathbb{R}}
\newcommand{\C}{\mathbb{C}}

\newcommand{\n}{\vec n}
%-------------------------------

\parskip=5pt

\title{On the convergence  of   multi-level Hermite-Pad\'e approximants }
\author{L.G. González Ricardo, \,\,\,G. L\'opez Lagomasino, \\ and S. Medina Peralta}
%\date{May 2021}

\linespread{1.3}

\begin{document}

    \pdfrender{StrokeColor=black,TextRenderingMode=2,LineWidth=0.4pt}

\begin{abstract}

	In the present paper we prove a Stieltjes type theorem on the convergence of a sequence of rational functions associated with a mixed type Hermite-Pad\'e approximation problem of a Nikishin system of functions and analyze the ratio asymptotic of the corresponding Hermite-Pad\'e polynomials.

\end{abstract}	

	\maketitle
    \noindent
    \textbf{Keywords:} Nikishin system, multiple orthogonal polynomials, Hermite-Pad\'e approximation, ratio asymptotic

    \noindent
    \textbf{AMS classification:}  Primary: 42C05; 30E10,  Secondary: 41A21

\section{Introduction}

    In \cite{Lund}, a mixed type Hermite-Pad\'e approximation problem was introduced in order to find    multipeakon
     solutions of the Degasperis-Procesi equation. The same construction is relevant in the study of the two matrix model \cite{BGS} and Cauchy  biorthogonal polynomials \cite{Bertola:CBOPs}.
     The functions approximated are the Cauchy transforms of a pair of measures forming a Nikishin system. This motivated in \cite{Lago_Sergio_Jacek} the extension of the approximation problem to the case of Nikishin systems with an arbitrary number of measures and study their convergence properties.   Previously, the authors of \cite {BB}  extended the two matrix model to an $m-$matrix model   and   studied   a Riemann-Hilbert problem that leads to  the mixed type Hermite-Pad\'e approximation problem introduced in \cite{Lago_Sergio_Jacek}.
      Recently, the definition   was further extended by V.G. Lysov in \cite{Lysov} and a number of interesting properties revealed. Before stating our goals, let us briefly present the so called Nikishin systems of measures and Lysov's definition of multi-level Hermite-Pad\'e approximants.

\subsection{Nikishin systems}
    Nikishin systems of measures were first introduced by E.M. Nikishin in \cite{nikishin}. Throughout the years, it has been shown that polynomials which share orthogonality relations with respect to such systems of measures behave asymptotically in a way similar to standard orthogonal polynomials (with respect to a single measure). A brief overview of the subject may be found in \cite{lago2}. 

    In the sequel, we will only consider Borel measures $s$ with constant sign, finite moments $c_n = \int x^n \D s (x) \neq \infty,  n\in\Z_+$, whose support $\supp s\subset \R$ consists of infinitely many points. We will denote by $\Delta$ the smallest interval which contains $\supp s$ ($\Co(\supp  s)=\Delta$). The class of these measures will be denoted by $\mathcal{M}(\Delta)$. Let
    $$\widehat{s}(z) = \int \frac{\D s(x)}{z-x}$$
    denote the Cauchy transform of the measure $s$. Obviously, $\widehat{s} \in \mathcal{H}(\overline{\C}\setminus\Delta)$ (holomorphic  in $\overline{\C}\setminus\Delta$). We can associate to $\widehat{s}$ its formal Laurent expansion at infinity
    $$\widehat{s}(z)\sim \sum_{j=0}^\infty\frac{c_j}{z^{j+1}},\qquad c_j = \int x^j\D s(x).$$

    Let $\Delta_\alpha$, $\Delta_\beta$ be two intervals contained in the real line such that $\Delta_\alpha\cap\Delta_\beta$ contains at most one common point. Assume that $\sigma_\alpha\in\mathcal{M}(\Delta_\alpha), \sigma_\beta\in\mathcal{M}(\Delta_\beta)$, and $\widehat{\sigma}_\beta \in L_1(\sigma_\alpha)$ (integrable with respect to $\sigma_\alpha$). With these two measures we construct a third one, denoted $\langle \sigma_\alpha,\sigma_\beta\rangle$, whose differential notation is given by
    $$\D\langle \sigma_\alpha,\sigma_\beta\rangle(x): = \widehat{\sigma}_\beta(x)\D\sigma_\alpha(x).$$

    When considering consecutive products of measures, for example $\langle \sigma_\alpha,\sigma_\beta, \sigma_\gamma \rangle := \langle\sigma_\alpha, \langle \sigma_\beta, \sigma_\gamma \rangle \rangle$, we implicitly assume not only that $\widehat{\sigma}_\gamma \in L_1(\sigma_\beta)$, but also $\langle \sigma_\beta, \sigma_\gamma\widehat{\rangle}\in L_1(\sigma_\alpha)$, where $\langle \sigma_\beta, \sigma_\gamma\widehat{\rangle}$ denotes the Cauchy transform of $\langle \sigma_\beta, \sigma_\gamma\rangle$. Notice that this product is neither commutative nor associative.

    \begin{definition}
        \label{Nikishin_sys}
        Take a collection $\Delta_j$, $j=1,\ldots,m,$ of intervals such that
        $\Delta_j\cap\Delta_{j+1}, $ $ j=1,\ldots,m-1$ contains at most one point.
        Let $(\sigma_1,\ldots,\sigma_m)$ be a system of measures such that $\Co(\supp \sigma_j)=\Delta_j$, $\sigma_j\in\mathcal{M}(\Delta_j)$, $j=1,\ldots,m$. We say that $(s_{1,1},\ldots,s_{1,m})=\mathcal{N}(\sigma_1,\ldots,\sigma_m)$, where
        $$s_{1,1}=\sigma_1,\quad s_{1,2}=\langle\sigma_1,\sigma_2 \rangle,\quad \ldots, \quad s_{1,m}=\langle \sigma_1, \langle \sigma_2,\ldots,\sigma_m \rangle\rangle,$$
        is the Nikishin system of measures generated by $(\sigma_1,\ldots,\sigma_m)$. The vector $(\widehat{s}_{1,1},\ldots,\widehat{s}_{1,m})$ is the corresponding Nikishin system of functions.
    \end{definition}

    Notice that any sub-system of $(\sigma_1,\ldots,\sigma_m)$ of consecutive measures is also a generator of a Nikishin system. In the sequel, for $1\leq j\leq k\leq m$ we write
    $$s_{j,k} := \langle \sigma_j, \sigma_{j+1},\ldots, \sigma_k \rangle,\qquad s_{k,j} := \langle \sigma_k, \sigma_{k-1},\ldots, \sigma_j \rangle.$$
    With the system of measures $(\sigma_1,\ldots,\sigma_m)$ we can also define the reversed Nikishin system $(s_{m,m}, \ldots, s_{m,1}) = \mathcal{N}(\sigma_m,\ldots,\sigma_1)$.

\subsection{Multilevel Hermite-Pad\'e polynomials}
    We use the definition given in \cite[Problem A]{Lysov}.

    Let $(\Z_+^m)^*$ be the set of all $m$-dimensional vectors with non-negative integer components, not identically equal to zero. For $\vec n = (n_1,\ldots,n_{m})\in (\Z_+^m)^*$ we define $|\vec n|= n_1 + \cdots + n_{m}$.
    \begin{definition}\label{HP}
        Consider the Nikishin system $\mathcal{N}(\sigma_1,\ldots,\sigma_m)$ and $\vec{n} = (n_1,\ldots,n_{m}) \in (\Z_+^m)^*$. There exist polynomials $a_{\vec{n},0}, a_{\vec{n},1},\ldots,a_{\vec{n},m}$,  where $\deg a_{\vec{n},j}\leq |\vec n|-1, j=0,1,\ldots,m-1,$ and $\deg a_{\vec n,m}\leq |\vec n|$, not all identically equal to zero, called multi-level (ML) Hermite-Pad\'e polynomials of $\mathcal{N}(\sigma_1,\ldots,\sigma_m)$ with respect to $\vec n$, that verify
        \begin{equation}
            \mathcal{A}_{\vec{n},j}(z) := \left((-1)^j a_{\vec n,j} + \sum_{k=j+1}^m (-1)^k a_{\vec n,k}\widehat{s}_{j+1,k}\right)(z) =  \mathcal{O}\left(\frac{1}{z^{n_{j+1}+1}}\right), \qquad z \to \infty,\label{Problem}
        \end{equation}
        where $j=0,\ldots,m-1$ (the asymptotic expansion of $\mathcal{A}_{\n,j}$ at $\infty$ begins with $z^{-n_{j+1} -1}$, or higher). For completeness, set $\mathcal{A}_{\vec{n},m} := (-1)^m a_{\vec n,m}$.
    \end{definition}

  We warn the reader that with our terminology in \cite[Problem A]{Lysov} the ML Hermite-Pad\'e polynomials were defined with respect to the system $\mathcal{N}(\sigma_m,\ldots,\sigma_1)$.

When $m =1$ the  definition reduces to that of classical Pad\'e approximation, which  plays a central role in the solution of the inverse spectral problem for a  discrete
string with Dirichlet boundary condition, see \cite{BSJ,stieltjes}. When $m=2$ and $\vec n = (n, 0)  $  definition \ref{HP}  reduces to the Hermite-Pad\'e approximation problem used in the solution of the inverse spectral problem for the   discrete cubic string, see \cite{Lund}.   For an arbitrary $m$ and $\vec n = (n, 0,\ldots ,0)$, one obtains the original definition of ML Hermite-Pad\'e polynomials given in \cite{Lago_Sergio_Jacek}.
We suspect that the  ML Hermite-Pad\'e approximations  studied here, or the  particular case considered in \cite{Lago_Sergio_Jacek}, may be connected with the solution of the inverse spectral problem for the  discrete $(m+1)-$string.

    In this scheme of approximation, the interpolation conditions involve all the Nikishin systems of the ``inner levels'', i.e. $\mathcal{N}(\sigma_1,\ldots,\sigma_m)$, $\mathcal{N}(\sigma_2,\ldots,\sigma_m)$, \ldots, $\mathcal{N}(\sigma_m) = (s_{m,m})$. Finding the polynomials $a_{\vec n,0}, a_{\vec n,1},\ldots,a_{\vec n,m}$ reduces to solving a homogeneous linear system of $|\vec n|(m+1)$ equations, given by the interpolation conditions, on $|\vec n|(m+1)+1$ unknowns, corresponding with the coefficients of the polynomials. Consequently, the system of equations has a non trivial solution.

    Following K. Mahler \cite{Mahler}, a multi-index $\vec n \in (\Z_+^m)^*$ is said to be normal if $\deg a_{{\vec n},j} = |\vec n| -1, j=0,\ldots,m-1,$ and $\deg a_{{\vec n},m} = |\vec n|$ (that is, when all the polynomials have maximum degree possible). The system of functions is said to be perfect when all the multi-indices are normal. In \cite[Theorem 1.1]{Lysov}, it was proved that the Nikishin system of functions is perfect for this approximation problem. Normality implies that the vector $(a_{\vec n,0},\ldots,a_{\vec n,m})$ is uniquely determined up to a multiplicative factor. In the sequel, we normalize this vector so that $a_{\vec n,m}$ has leading coefficient equal to one.

    A sequence of multi-indices $\Lambda \subset (\Z_+^m)^*$ is called a ray sequence when $\lim_{\vec n \in \Lambda} n_j/|\vec n|$ exists for all $j=1,\ldots,m$. When the $\Delta_j$ are bounded non-intersecting intervals, and $\sigma_j'  \neq 0$, a.e. in $\Delta_j$, $j=1,\ldots,m,$ in \cite[Theorem 1.2]{Lysov} the logarithmic asymptotic of ray sequences of ML polynomials was obtained. Using that result, it was also proved \cite[Proposition  1.2]{Lysov} that
    \begin{equation}
        \label{markov}
        \lim_{\vec n \in \Lambda} \frac{a_{\vec n,j}}{a_{\vec n,m}} = \widehat{s}_{m,j+1}, \qquad j=0,\ldots,m-1
    \end{equation}
    uniformly on each compact subset of $\overline{\C} \setminus \Delta_m$ (with geometric rate). This is a Markov type theorem, see \cite{markov}. Notice that the limits belong to the Nikishin system of functions corresponding to $\mathcal{N}(\sigma_m,\ldots,\sigma_1)$. It should be said that the proof of these results given in \cite{Lysov} may be adapted to the case when the measures $\sigma_j$ are in the much wider class of regular measures (for the definition of a regular measure see \cite[Chapter 3]{Stahl_Totik}).

    We  provide a convergence result such as \eqref{markov} in which the intervals $\Delta_j$ may be unbounded and consecutive intervals can have a common end point. This situation appears in \cite{BGS} in relation with the study of the two matrix model.

    When the $\Delta_j$ are bounded non-intersecting intervals, and $\sigma_j' \neq 0$ a.e. in $\Delta_j$, we also give a result about the asymptotic of sequences of ratios of polynomials $a_{\vec n,j}$ corresponding to ``consecutive'' multi-indices which resembles E.A. Rakhmanov's celebrated theorem on the ratio asymptotic of standard orthogonal polynomials (see \cite{Rak1, Rak2, Rak3, Nevai}). With the original definition of ML Hermite-Pad\'e polynomials introduced in  \cite{Lago_Sergio_Jacek} the ratio asymptotic was obtained in \cite{Lago_Sergio_Ulises}.

\subsection{Statement of the main results}
    A measure $s \in \mathcal{M}(\Delta)$ is said to satisfy Carleman's condition when the sequence of its moments $(c_n)_{n\geq 0}$ verifies
    \[ \sum_{n=0}^{\infty} |c_n|^{-1/2n} = \infty.
    \]
    When $\Delta$ is either $\R_+$ of $\R_-$, it is well known, see \cite{Car}, that this condition implies that the moment problem for $(c_n)_{n\geq 0}$ is determinate (that is, there is only one measure with that collection of moments). In turn, if the moment problem for $s$ is determinate then the sequence of diagonal Pad\'e approximants converges to $\widehat{s}$ on each compact subset of $\C \setminus \Delta$ (see T.J. Stieltjes \cite{stieltjes}). We prove the following Carleman-Stieltjes type theorem in the context of ML Hermite-Pad\'e approximants.
    \begin{theorem}
        \label{Convergence}
        Let $\Lambda \subset (\Z_+^{m})^*$ be an infinite sequence of distinct multi-indices for which there exist $\ell\in \{0,\ldots,m-2\}$ and a (fixed) non-negative integer $N$ such that $n_{j+1} \leq n_{j} + N$  for all $\ell+1 \leq j \leq m-1$ and $\vec n \in \Lambda$. Consider the sequence of vector polynomials $(a_{\vec n,0},\ldots,a_{\vec n,m})_{\vec n \in \Lambda}$ associated with  $\mathcal{N}(\sigma_1,\ldots,\sigma_m)$. For $j=\ell,\ldots,m-2$ the polynomial $a_{\n,j}$ has at least $  |\n|-2m- N\frac{m(m+1)}{2}$ sign changes in $\mathring{\Delta}_m$ (the interior of $\Delta_m$ with the Euclidean topology of $\R$). The polynomials $a_{\n,m-1}$ and $a_{\n,m}$ have, respectively, $|\n|-1$ and $|\n|$ interlacing simple zeros in $\mathring{\Delta}_m$.
        Suppose that either the sequence of moments of $\sigma_m$ satisfies Carleman's condition or $\Delta_{m-1} $ is  a bounded interval which does not intersect $\Delta_m$; then \eqref{markov} holds uniformly on each compact subset of $ \overline{\C} \setminus\Delta_m$ for $j = \ell,\ldots,m-1$. If $\Lambda \subset (\Z_+^{m})^*$ is an arbitrary infinite sequence of distinct multi-indices and  $\sigma_m$ satisfies Carleman's condition or, $\Delta_{m-1} $ is  a bounded interval which does not intersect $\Delta_m$ and $\lim_{\vec n \in \Lambda} (n_1 +\cdots + n_{m-1}) = \infty$, then \eqref{markov} takes place for $j= m-1$.
    \end{theorem}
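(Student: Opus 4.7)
The plan is to combine three ingredients: orthogonality relations extracted from the interpolation conditions \eqref{Problem}, a zero-counting argument exploiting the AT-property of Nikishin systems, and a moment-problem identification of the limit.

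First I would peel off orthogonality conditions from each $\mathcal{A}_{\vec n, j}$. The decomposition
\begin{equation*}
a_{\vec n, k}(z)\widehat{s}_{j+1,k}(z) = \widehat{a_{\vec n, k}\, s_{j+1,k}}(z) + q_{\vec n, j, k}(z)
\end{equation*}
with $q_{\vec n, j, k}$ a polynomial of degree $\deg a_{\vec n, k} - 1$, rewrites $\mathcal{A}_{\vec n, j}$ as a polynomial plus a Cauchy transform on $\Delta_{j+1}$; the prescribed decay at infinity forces the polynomial part to vanish and, after using $\D s_{j+1,k}(x) = \widehat{s}_{j+2,k}(x)\,\D\sigma_{j+1}(x)$ for $k>j+1$, yields the clean identity
\begin{equation*}
\int x^\nu \mathcal{A}_{\vec n, j+1}(x)\,\D\sigma_{j+1}(x) = 0, \qquad \nu = 0, \ldots, n_{j+1} - 1.
\end{equation*}
Iterating this recursion pushes orthogonality successively through $\Delta_{j+2}, \Delta_{j+3}, \ldots$ down to $\Delta_m$. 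At each descent the AT-property of the Nikishin system and the growth constraint $n_{j+1} \leq n_j + N$ ensure that only a constant number of conditions is lost; cumulating the losses from level $j$ through $m$ produces the bound $|\vec n| - 2m - Nm(m+1)/2$ on the sign changes of $a_{\vec n, j}$ in $\mathring{\Delta}_m$. For $j = m-1,m$ the argument is cleaner: Lysov's normality (Theorem~1.1) together with the AT-property forces the full counts $|\vec n|-1$ and $|\vec n|$, and the interlacing follows from the positivity of the partial-fraction residues of $a_{\vec n, m-1}/a_{\vec n, m}$ against $\D\sigma_m$.

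With the zero count established, the ratios $a_{\vec n, j}(z)/a_{\vec n, m}(z)$ are analytic on $\overline{\C} \setminus \Delta_m$. I would derive a representation of the form
\begin{equation*}
\widehat{s}_{m, j+1}(z) - \frac{a_{\vec n, j}(z)}{a_{\vec n, m}(z)} = \int_{\Delta_m} \frac{E_{\vec n, j}(x)}{z - x}\, \D\tau_{\vec n}(x),
\end{equation*}
with $\tau_{\vec n}$ a finite positive measure and $E_{\vec n, j}$ a kernel controlled by the normalizations. Since the ratios are Cauchy-type transforms they are locally bounded on $\overline{\C} \setminus \Delta_m$, so Montel's theorem yields subsequential limits realized as Cauchy transforms of measures on $\Delta_m$. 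Under Carleman's condition the associated moment problem is determinate, and matching the first few coefficients of the Laurent expansion at infinity forces every subsequential limit to coincide with $\widehat{s}_{m, j+1}$. In the alternative case where $\Delta_{m-1}$ is bounded and disjoint from $\Delta_m$, the analytic continuation of $\widehat\sigma_{m-1}$ across $\Delta_m$ supplies the replacement for Carleman. For the final assertion (arbitrary $\Lambda$ with $j = m-1$), the reasoning reduces to the classical Stieltjes-Pad\'e argument applied to $\sigma_m$: the orthogonality $\int a_{\vec n, m}\,x^\nu\,\D\sigma_m = 0$ for $\nu = 0, \ldots, n_m - 1$, reinforced by the $n_1 + \cdots + n_{m-1}$ extra conditions that descend from the lower levels, pins down the limit without controlling each $n_j$ individually.

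The principal obstacle is the telescoping step for the intermediate polynomials with $j < m-1$. Descending orthogonality from $\Delta_{j+1}$ down to $\Delta_m$ requires a careful identification of which linear combinations of $a_{\vec n, j+1},\ldots,a_{\vec n, m}$ carry the orthogonality at each level, together with an exact accounting of the conditions lost at the transition. The hypothesis $n_{j+1} \leq n_j + N$ is indispensable here: without it, degree mismatches between successive $a_{\vec n, k}$'s could absorb unboundedly many orthogonality conditions and the sign-change bound would degenerate. Once the zeros are located on $\Delta_m$ and the Cauchy-type representation of the ratios is secured, the convergence follows the Markov-Stieltjes template, with Carleman's condition (or the disjoint-bounded hypothesis) closing the moment-problem loop.
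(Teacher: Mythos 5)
Your proposal correctly identifies the overall architecture (extract orthogonality from the interpolation conditions, count zeros on $\Delta_m$, close with a moment-determinacy or multipoint Pad\'e argument), but it leaves the central technical step unexecuted, and you say so yourself: the ``telescoping'' that descends from level $j$ to level $m$ is precisely the content of the paper's Lemma \ref{lm:asym_MP}, and without it there is nothing to count. The paper's mechanism is not a generic AT-system argument: at each stage one divides the current form by a Cauchy transform ($\widehat{\sigma}_{j+1}$, then $\widehat{s}_{j+2,j+1}$, etc.), uses the inverse-measure identity \eqref{CT:inv} and the ratio formula \eqref{CT:ratio} to recognize the quotient as a new Nikishin-type form, applies Lemma \ref{lm:rem} to harvest orthogonality with respect to the inverse measures $\tau$, and invokes the level-connection formula of Lemma \ref{lm:levels} to identify the resulting expression as $a_{\vec n,j}$ plus a tail in which $a_{\vec n,j+1},\ldots$ have been successively eliminated; after $m-j-1$ such reductions one reaches the two-term relation \eqref{orden} for $a_{\vec n,j}-a_{\vec n,m}\widehat{s}_{m,j+1}$ on $\Delta_m$. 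This elimination of the intermediate polynomials is the idea your sketch is missing. Relatedly, your displayed orthogonality relation $\int x^\nu \mathcal{A}_{\vec n,j+1}\,\D\sigma_{j+1}=0$ for $\nu\le n_{j+1}-1$ is only correct for $j=0$; for $j\ge 1$ the correct statement carries the varying denominator $Q_{\vec n,j}$ and yields $\eta_{\vec n,j+1}=n_1+\cdots+n_{j+1}$ conditions, not $n_{j+1}$.

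The convergence step also glosses over a real difficulty for $j\le m-2$. Because the interpolation order in \eqref{orden} falls short of $|\vec n|+1$ by a bounded defect $\kappa$, the ratios $a_{\vec n,j}/a_{\vec n,m}$ are only \emph{incomplete} diagonal multipoint Pad\'e approximants; your proposed error representation with a positive measure and the ensuing local boundedness are not available in that setting. The paper instead invokes the Bustamante--L\'opez lemma to get convergence in $1$-Hausdorff content and then Gonchar's lemma (using that the poles, i.e.\ the zeros of $a_{\vec n,m}$, all lie on $\Delta_m$) to upgrade to uniform convergence. Your Montel-plus-determinacy route works essentially only for the complete case $j=m-1$, where the paper likewise reduces matters to classical multipoint Pad\'e approximation of $\widehat{s}_{m,m}$ with respect to $w_{\vec n,m-1}$ (whence the interlacing of $a_{\vec n,m-1}$ and $a_{\vec n,m}$ as orthogonal and second-kind polynomials for the varying measure $\D s_{m,m}/w_{\vec n,m-1}$, rather than residue positivity against $\D\sigma_m$ as you suggest).
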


If $\Lambda$ is a sequence of distinct multi-indices whose components are decreasing, the (first) condition on $\Lambda$ in Theorem \ref{Convergence} is verified with $\ell=0$ and $N=0$.  More precise information regarding the zeros of the polynomials $a_{\n,j}, j=0,\ldots,m-2$ will be given in Section 2.

    Let $\vec{n} \in \mathbb{Z}_{+}^{m}$ and $l \in\{1, \ldots, m\} .$ Define
    $$
    \vec{n}^{l}:=\left(n_{1}, \ldots, n_{l}+1, \ldots, n_{m}\right),
    $$
    the multi-index obtained adding $1$ to the $l$-th component of $\n$.

    \begin{theorem}
    	\label{th:ratiom}
        Consider the Nikishin system $\mathcal{N}(\sigma_1,\ldots,\sigma_m)$ where the  $\Delta_k,k=1,\ldots,m,$ are bounded, disjoint intervals, and $\sigma_k' \neq 0$ a.e. in $\Delta_k$. Let $\Lambda \subset (\Z_+^m)^*$ be an infinite sequence of distinct multi-indices for which  there exists a non-negative integer $N$ such that $n_{j+1} \leq n_{j} + N$  for all $1 \leq j \leq m-1$ and $\vec n \in \Lambda$.
        Then, for $k=0,\ldots, m$
	    \begin{equation}
            \label{left*}
	        \lim_{\n \in \Lambda} \frac{a_{\n^l,k}(z)}{a_{\n,k}(z)} = \frac{{\psi}^{(l)}_m(z)}{({\psi}^{(l)}_m)'(\infty)},
	    \end{equation}
	    uniformly on each compact subset of $\C\setminus\Delta_m$, where ${\psi}^{(l)}_m \in \mathcal{H}(\overline{\C} \setminus\Delta_m)$ is defined in \eqref{branches}.
    \end{theorem}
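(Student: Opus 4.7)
The plan is to reduce \eqref{left*} to the case $k=m$ by means of the Markov-type Theorem \ref{Convergence}, then derive a varying-weight full orthogonality relation for $a_{\n,m}$ on $\Delta_m$, and finally invoke a Rakhmanov-type ratio asymptotic theorem with varying weights.

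For the reduction, I would note that the hypotheses of Theorem \ref{th:ratiom} include those of Theorem \ref{Convergence} with $\ell=0$, so both $a_{\n,k}/a_{\n,m}$ and $a_{\n^l,k}/a_{\n^l,m}$ converge to $\widehat{s}_{m,k+1}$ uniformly on compact subsets of $\overline{\C}\setminus\Delta_m$ for every $k=0,\ldots,m-1$. Because $\Delta_m$ is disjoint from $\Delta_{m-1}$, $\widehat{s}_{m,k+1}$ is the Cauchy transform of a sign-definite measure on $\Delta_m$ and therefore non-vanishing in $\overline{\C}\setminus\Delta_m$. Writing
$$\frac{a_{\n^l,k}(z)}{a_{\n,k}(z)} = \frac{a_{\n^l,m}(z)}{a_{\n,m}(z)} \cdot \frac{a_{\n^l,k}(z)/a_{\n^l,m}(z)}{a_{\n,k}(z)/a_{\n,m}(z)},$$
the second factor converges to $1$ locally uniformly outside $\Delta_m$, reducing the statement to the case $k=m$.

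To obtain the orthogonality I would iteratively eliminate $a_{\n,0},\ldots,a_{\n,m-1}$ from $\mathcal{A}_{\n,0}$ by the telescoping procedure standard in the Nikishin setting (in the spirit of the proof of \cite[Proposition 1.2]{Lysov}), producing a positive varying measure $\D\rho_{\n}(x) = h_{\n}(x)\,\D\sigma_m(x)$ on $\Delta_m$ such that
$$\int x^\nu a_{\n,m}(x)\,\D\rho_{\n}(x) = 0,\qquad \nu=0,1,\ldots,|\n|-1,$$
where $h_{\n}$ is a rational expression in $a_{\n,j}$, $j<m$, and in Cauchy transforms $\widehat{s}_{j,k}$ of the sub-Nikishin systems. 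Iterating Theorem \ref{Convergence} on the reduced Nikishin systems $\mathcal{N}(\sigma_j,\ldots,\sigma_m)$ then shows that $h_{\n}$ converges uniformly on $\Delta_m$ to a strictly positive continuous limit $h$; the fact that each $h_{\n}$ is of constant sign on $\Delta_m$ would follow from the interlacing of the zeros of $a_{\n,m-1}$ and $a_{\n,m}$ in $\mathring{\Delta}_m$ granted by Theorem \ref{Convergence}.

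Since $\sigma_m' \neq 0$ a.e.\ on the bounded interval $\Delta_m$ and $h_{\n}\to h>0$ uniformly, the measures $\{\rho_{\n}\}$ would lie in a compact Szeg\H{o}-type class of varying weights, to which Rakhmanov's ratio asymptotic theorem in its varying-weight form (adapted to the Nikishin context in \cite{Lago_Sergio_Ulises}) will apply, yielding the limit $\psi_m^{(l)}/(\psi_m^{(l)})'(\infty)$ as in \eqref{branches}. The hardest part will be twofold: first, carrying out the telescoping for the new ML interpolation conditions of Definition \ref{HP}---whose shifted orders $n_{j+1}+1$ create more intricate combinatorics than the single-shift conditions treated in \cite{Lago_Sergio_Jacek,Lago_Sergio_Ulises}---and verifying non-vanishing of $h_{\n}$; and second, removing the assumption that $\Lambda$ be a ray sequence by passing to subsequences along which $n_j/|\n|$ stabilize and inferring subsequence-independence of the limit from the uniqueness of $\psi_m^{(l)}$ in \eqref{branches}.
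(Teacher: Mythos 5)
Your reduction of the cases $k=0,\ldots,m-1$ to the case $k=m$ via the triple factorization and the Markov-type Theorem \ref{Convergence} is sound and is essentially what the paper does (the paper additionally uses the argument principle to control the zeros of $a_{\n,k}$, but your appeal to the non-vanishing of $\widehat{s}_{m,k+1}$ accomplishes the same thing on compact subsets of $\C\setminus\Delta_m$). The gap is in the core step, the case $k=m$. The full orthogonality relations satisfied by $a_{\n,m}$ on $\Delta_m$ are, by Lemma \ref{lm:zeros},
\begin{equation*}
\int x^{\nu}\, a_{\n,m}(x)\,\frac{\D\sigma_m(x)}{w_{\n,m-1}(x)}=0,\qquad \nu=0,\ldots,|\n|-1,
\end{equation*}
where $w_{\n,m-1}=Q_{\n,m-1}$ is a polynomial of degree $\eta_{\n,m-1}=n_1+\cdots+n_{m-1}\to\infty$ with zeros on $\Delta_{m-1}$. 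No telescoping can remove this feature: the varying part of the weight has unbounded degree and decays or grows geometrically on $\Delta_m$, so it does \emph{not} converge uniformly to a fixed positive continuous limit $h$, and the family $\{\rho_{\n}\}$ does not sit in a compact Szeg\H{o}-type class. A structural way to see that your route cannot work: if the weights did converge to a fixed Szeg\H{o} weight on $\Delta_m$, the ratio $a_{\n^l,m}/a_{\n,m}$ (always a ratio of monic orthogonal polynomials of consecutive degrees $|\n|$ and $|\n|+1$) would converge to $\varphi_m/\varphi_m'(\infty)$, the conformal map of $\overline{\C}\setminus\Delta_m$ onto the exterior of the unit disk --- a limit depending only on $\Delta_m$ and independent of $l$. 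But the asserted limit $\psi_m^{(l)}/(\psi_m^{(l)})'(\infty)$ in \eqref{left*} genuinely depends on $l$ and on all the intervals $\Delta_1,\ldots,\Delta_m$ through the Riemann surface $\mathcal{R}$.

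What is actually needed --- and what the paper does in Theorem \ref{th:ratio} --- is to treat the ratio asymptotics of all the polynomials $Q_{\n,k}$, $k=1,\ldots,m$, \emph{simultaneously}: the weight governing $Q_{\n,k}$ involves $Q_{\n,k-1}$ and $Q_{\n,k+1}$, so the unknown limits $G_k=\lim Q_{\n^l,k}/Q_{\n,k}$ are coupled. The paper first extracts convergent subsequences using the interlacing of zeros (Lemma \ref{lm:interlace}), then applies the ratio and relative asymptotics of orthogonal polynomials with respect to \emph{varying} measures to show that the limits $(G_k)_{k=1}^m$ solve, up to normalizing constants, the system of boundary value problems of Lemma \ref{lm:BVP}, whose unique solution is $(F_k^{(l)})_{k=1}^m$ built from the branches \eqref{branches} of the conformal map of $\mathcal{R}$. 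This coupled boundary-value-problem argument is the missing idea in your proposal; without it, neither the positivity/convergence of your $h_{\n}$ nor the identification of the limit with $\psi_m^{(l)}$ can be established. (Your final remark about ray sequences is moot: no ray-sequence assumption is needed, and the subsequence-independence comes from the uniqueness in Lemma \ref{lm:BVP}, not from stabilizing $n_j/|\n|$.)
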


    The paper is organized as follows. In Section \ref{sec:conv} we prove Theorem \ref{Convergence}.  In Section \ref{sec:ratio} we study the asymptotic properties of the forms $\mathcal{A}_{\n,k}$ and the polynomials $a_{\n,k}$. In particular, Theorem \ref{th:ratiom} is derived from the more general Theorem \ref{th:ratio}. In the analysis of the ratio asymptotic an associated Riemann surface comes into play which will be introduced in that section.

\section{Convergence of the ML Hermite-Pad\'e approximants}\label{sec:conv}

\subsection{Preliminaries}
    First, we study the location of the zeros of the forms $\mathcal{A}_{\vec n,j}$, $j=0,\ldots,m$. For that purpose, we use \cite[Lemma 2.1]{Lago_Sergio} which we state here for convenience of the reader.

    \begin{lemma}
        \label{lm:rem}
        Let $(s_{1,1},\ldots,s_{1,m}) =\mathcal{N}(\sigma_1,\ldots,\sigma_m)$ be given. Assume that there exist polynomials with real coefficients $a_0,\ldots, a_m$ and a polynomial $w$ with real coefficients whose zeros lie in $\C\setminus\Delta_1$ such that
        \begin{equation*}
            \frac{\mathcal{A}(z)}{w(z)}\in\mathcal{H}(\C\setminus\Delta_1) \quad \textrm{ and }\quad \frac{\mathcal{A}(z)}{w(z)}=\mathcal{O}\left(\frac{1}{z^N}\right),\quad z\rightarrow \infty,
        \end{equation*}
        where $\displaystyle \mathcal{A}:= a_0 +\sum_{k=1}^m a_k\widehat{s}_{1,k}$ and $N\geq 1$. Let $\displaystyle \mathcal{A}_1:= a_1 + \sum_{k=2}^m a_k\widehat{s}_{2,k}$. Then,
        \begin{equation*}
            \frac{\mathcal{A}(z)}{w(z)} = \int \frac{\mathcal{A}_1(x)}{z-x}\frac{\D \sigma_1(x)}{w(x)}.
        \end{equation*}
        If $N\geq 2$, we also have
        \begin{equation}
            \label{A:orth}
            \int x^\nu \mathcal{A}_1(x)\frac{\D \sigma_1(x)}{w(x)}=0,\qquad \nu=0,1,\ldots, N-2.
        \end{equation}
        In particular, $\mathcal{A}_1$ has at least $N-1$ sign changes in $\mathring{\Delta}_1$ (the interior of $\Delta_1$ in $\R$ with the usual topology).
    \end{lemma}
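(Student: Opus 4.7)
The plan is to turn $\mathcal{A}/w$ into a single Cauchy transform of $\mathcal{A}_1$ with respect to $d\sigma_1/w$, read off the orthogonality conditions from its Laurent expansion at $\infty$, and finally apply the classical argument to count the sign changes of $\mathcal{A}_1$.

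\textbf{Integral representation.} From $s_{1,k}=\langle\sigma_1,s_{2,k}\rangle$ for $k\geq 2$ and $s_{1,1}=\sigma_1$, with the formal convention $\widehat{s}_{2,1}\equiv 1$, one has
\[
\widehat{s}_{1,k}(z)=\int\frac{\widehat{s}_{2,k}(x)}{z-x}\,d\sigma_1(x),\qquad k=1,\ldots,m.
\]
Writing $a_k(z)=a_k(x)+(z-x)\,p_k(z,x)$ with $p_k$ a polynomial in $(z,x)$ and substituting into $\mathcal{A}$ yields $\mathcal{A}(z)=P(z)+\int \mathcal{A}_1(x)/(z-x)\,d\sigma_1(x)$ for some polynomial $P$. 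Since $w$ does not vanish on $\Delta_1\supset\supp\sigma_1$, the elementary identity
\[
\frac{1}{w(z)(z-x)}=\frac{1}{w(x)(z-x)}+\frac{q(z,x)}{w(z)\,w(x)},\qquad q(z,x):=\frac{w(x)-w(z)}{z-x},
\]
is valid for $x\in\Delta_1$, with $q$ a polynomial. Substituting and collecting all polynomial contributions into a single numerator yields
\[
\frac{\mathcal{A}(z)}{w(z)}=\frac{R(z)}{w(z)}+\int\frac{\mathcal{A}_1(x)}{z-x}\,\frac{d\sigma_1(x)}{w(x)}
\]
for some polynomial $R$.

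\textbf{Elimination of $R$.} The integral on the right belongs to $\mathcal{H}(\C\setminus\Delta_1)$ and vanishes at $\infty$; by hypothesis so does $\mathcal{A}/w$. Hence $R/w$ is a rational function, holomorphic in $\C\setminus\Delta_1$ and vanishing at $\infty$. Its only potential poles are the zeros of $w$, all of which lie outside $\Delta_1$, so $w$ must divide $R$; the resulting polynomial quotient also vanishes at $\infty$, which forces $R\equiv 0$. This is exactly the first identity of the lemma.

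\textbf{Orthogonality and sign changes.} For $N\geq 2$, expanding $1/(z-x)=\sum_{j\geq 0}x^j/z^{j+1}$ and matching with $\mathcal{A}/w=\mathcal{O}(z^{-N})$ makes the coefficients of $z^{-1},\ldots,z^{-(N-1)}$ vanish, yielding \eqref{A:orth}. For the sign-change count, note that $\mathcal{A}_1$ is real on $\Delta_1\setminus\Delta_2$ (each $\widehat{s}_{2,k}$ is real there) and that $d\sigma_1/w$ has constant sign on $\Delta_1$ (since $w$ does not change sign there). If $\mathcal{A}_1$ had only $s<N-1$ sign changes in $\mathring{\Delta}_1$, at points $x_1,\ldots,x_s$, then $p(x):=\prod_{i=1}^s (x-x_i)$, of degree $\leq N-2$, would make $p\mathcal{A}_1$ of constant sign on $\Delta_1$, forcing $\int p\,\mathcal{A}_1\,d\sigma_1/w\neq 0$ and contradicting \eqref{A:orth} tested against $p$. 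The one delicate point throughout is integrability: every swap between polynomial multiplication and integration, as well as the termwise Laurent expansion, is justified by the standing Nikishin hypotheses $\widehat{s}_{2,k}\in L_1(\sigma_1)$ together with the finiteness of all moments of $\sigma_1$.
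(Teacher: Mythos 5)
Your proof is correct. Note that the paper itself does not prove this lemma: it is quoted verbatim from \cite[Lemma 2.1]{Lago_Sergio} ``for convenience of the reader,'' and your argument --- rewriting each $\widehat{s}_{1,k}$ as a Cauchy transform of $\widehat{s}_{2,k}\,\D\sigma_1$, absorbing the polynomial remainders, killing them via holomorphy of $\mathcal{A}/w$ at the zeros of $w$ and the decay at infinity, then reading off the orthogonality from the Laurent expansion and counting sign changes in the standard way --- is essentially the argument used in that reference.
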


    Given $\vec n = (n_1\ldots,n_{m}) \in \Z_+^m$, set
    \begin{align*}
	    \eta_{\vec{n},j} &:= n_1+\cdots+n_j.
    \end{align*}

    \begin{lemma}
        \label{lm:zeros}
        The form $\mathcal{A}_{\vec n,0}$ has no zero in $\C \setminus \Delta_1$. For $j=1,\ldots, m,$ $\mathcal{A}_{\vec n,j}$ has exactly $\eta_{\vec n,j}$ zeros in $\C \setminus \Delta_{j+1},\, (\Delta_{m+1} = \emptyset)$, they are all simple and lie in $\mathring{\Delta}_j$. If $w_{\vec n,j}, j=1,\ldots,m-1,$ denotes the monic  polynomial whose roots are the simple zeros which $\mathcal{A}_{\vec n,j}$ has in $\mathring{\Delta}_j$ then
        \begin{equation}
            \label{forma*}
        	\frac{\mathcal{A}_{\vec{n},j}}{w_{\vec{n},j}} = \mathcal{O}\left(\frac{1}{z^{\eta_{\vec{n},j+1}+1}}\right) \in \mathcal{H}(\C\setminus\Delta_{j+1}), \qquad z\to \infty.
        \end{equation}
        For each $j = 0,\ldots,m-1$ the order of interpolation at infinity prescribed in \eqref{Problem} is exact.
    \end{lemma}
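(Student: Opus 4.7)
The plan is to unfold the interpolation conditions \eqref{Problem} by iterating Lemma \ref{lm:rem} on the shifted Nikishin systems $\mathcal{N}(\sigma_{j+1},\ldots,\sigma_m)$. A sign/index check shows that, applied to that shifted system with scalars $b_k=(-1)^{j+k}a_{\vec n,j+k}$, the forms $\mathcal{A}$ and $\mathcal{A}_1$ of Lemma \ref{lm:rem} are precisely $\mathcal{A}_{\vec n,j}$ and $\mathcal{A}_{\vec n,j+1}$. Moreover, since consecutive $\Delta_i$'s meet in at most one point, $\mathring{\Delta}_j\subset\C\setminus\Delta_{j+1}$, so any zero of $\mathcal{A}_{\vec n,j}$ in $\mathring{\Delta}_j$ legitimately serves as a root of the weight $w$ required by that lemma (which is holomorphic on $\mathring{\Delta}_j$ since it only inherits singularities on $\Delta_{j+1}$).

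I would then prove by induction on $j$ that $\mathcal{A}_{\vec n,j}$ has at least $\eta_{\vec n,j}$ sign changes in $\mathring{\Delta}_j$, collecting them in the monic polynomial $w_{\vec n,j}$ of degree $d_j\geq \eta_{\vec n,j}$, and that \eqref{forma*} holds. The base $j=1$ is given by $\mathcal{A}_{\vec n,0}=\mathcal O(z^{-n_1-1})$ with trivial weight $w\equiv 1$. For the induction step, from $d_j\geq\eta_{\vec n,j}$ the quotient
\[
\frac{\mathcal{A}_{\vec n,j}(z)}{w_{\vec n,j}(z)}=\mathcal O\!\left(\frac{1}{z^{n_{j+1}+1+d_j}}\right)\in\mathcal H(\C\setminus\Delta_{j+1})
\]
feeds Lemma \ref{lm:rem} and produces at least $n_{j+1}+d_j\geq\eta_{\vec n,j+1}$ sign changes of $\mathcal{A}_{\vec n,j+1}$ in $\mathring{\Delta}_{j+1}$.

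Exactness follows by closing the telescope at $j=m$: $\mathcal{A}_{\vec n,m}=(-1)^m a_{\vec n,m}$ is a polynomial of degree at most $|\vec n|=\eta_{\vec n,m}$, yet the chain forces $d_m\geq|\vec n|$, so equality holds and $a_{\vec n,m}$ has exactly $|\vec n|$ simple zeros, all in $\mathring{\Delta}_m$. Any strict inequality $d_j>\eta_{\vec n,j}$, any extra zero of $\mathcal{A}_{\vec n,j}$ in $\C\setminus\Delta_{j+1}$ beyond the sign changes already identified, or a higher order of vanishing at infinity than \eqref{Problem} prescribes can be absorbed into an enlarged weight (appending the factor $z-z_0$ for a real extraneous zero, or $(z-z_0)(z-\bar z_0)$ otherwise, so that the weight retains real coefficients and its roots remain in $\C\setminus\Delta_{j+1}$). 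The same cascade then forces $d_m>|\vec n|$, which is impossible; hence the counts are sharp, the interpolation order is exact, and for $j\geq 1$ all zeros of $\mathcal{A}_{\vec n,j}$ in $\C\setminus\Delta_{j+1}$ are simple and lie in $\mathring{\Delta}_j$.

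The same absorption argument handles the case $j=0$: a hypothetical zero of $\mathcal{A}_{\vec n,0}$ in $\C\setminus\Delta_1$ would furnish a nontrivial weight at the base of the induction, yielding $d_1\geq n_1+1$ and eventually $d_m>|\vec n|$. The main delicate point I anticipate is the sign and index bookkeeping needed to identify the shifted $\mathcal{A}_1$ of Lemma \ref{lm:rem} with $\mathcal{A}_{\vec n,j+1}$, and the verification that every weight used in the cascade (including the hypothetical enlargements) has real coefficients and roots in $\C\setminus\Delta_{j+1}$, which is where the hypothesis that consecutive $\Delta_i$'s share at most one point is essential.
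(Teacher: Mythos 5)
Your proposal follows essentially the same route as the paper: iterate Lemma \ref{lm:rem} on the shifted Nikishin systems to cascade sign changes from level to level, close the chain at $\mathcal{A}_{\vec n,m}=(-1)^m a_{\vec n,m}$ where the degree bound $\deg a_{\vec n,m}\le|\vec n|$ caps the count, and absorb any hypothetical extra zero or extra order of interpolation into the weights to force a contradiction at the last level. The one detail you gloss over, which the paper spells out, is ruling out $a_{\vec n,m}\equiv 0$ (orthogonality relations alone do not give sign changes for the zero polynomial); this is excluded because \eqref{Problem} would then propagate downward and force all the $a_{\vec n,k}$ to vanish identically, contradicting the definition.
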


    \begin{proof}
        From \eqref{Problem} for $j=0$, using Lemma \ref{lm:rem} with $w \equiv 1$,  we obtain
        \begin{equation*}
            \int x^\nu \mathcal{A}_{\vec{n},1}(x)\D \sigma_1(x) =0,\qquad \nu = 0,1,\ldots,n_1-1.
        \end{equation*}
        Therefore, $\mathcal{A}_{\vec{n},1}$ has at least $n_1$ sign changes in $\mathring{\Delta}_1$.

        Let $w_{\vec{n},1}$ be a polynomial whose roots lie in $\C \setminus \Delta_2$ and contain all the points where $\mathcal{A}_{\vec{n},1}$ changes sign in $\mathring{\Delta}_1$. Then, $\deg w_{\vec{n},1} \geq n_1$ and taking into account \eqref{Problem} for $j=1$, we obtain
        \begin{equation*}
            \frac{\mathcal{A}_{\vec{n},1}(z)}{w_{\vec{n},1}(z)} = \mathcal{O}\left(\frac{1}{z^{\eta_{\vec{n},2}+1}}\right) \in \mathcal{H}(\C\setminus\Delta_2).
        \end{equation*}

        Notice that $\mathcal{A}_{\vec{n},1}$ and $w_{\vec{n},1}$ satisfy the hypothesis of Lemma \ref{lm:rem}, so
        \begin{equation*}
            \int x^\nu \mathcal{A}_{\vec{n},2}(x)\frac{\D\sigma_2(x)}{w_{\vec{n},1}(x)}=0,\qquad \nu = 0,1,\ldots, n_1+n_2-1.\nonumber
        \end{equation*}
        This implies that $\mathcal{A}_{\vec{n},2}$ has, at least, $n_1+n_2$ sign changes in $\mathring{\Delta}_2$.

        Let $w_{\vec{n},2}$ be a polynomial whose roots lie in $\C \setminus \Delta_3$ and contain all the points where $\mathcal{A}_{\vec{n},2}$ changes sign in $\mathring{\Delta}_2$. Then, $\deg w_{\vec{n},2} \geq n_1 + n_2$ and taking into account \eqref{Problem} for $j=2$, we obtain
        \begin{equation*}
           \frac{\mathcal{A}_{\vec{n},2}(z)}{w_{\vec{n},2}(z)} = \mathcal{O}\left(\frac{1}{z^{\eta_{\vec{n},3}+1}}\right) \in  \mathcal{H}(\C\setminus\Delta_3).
        \end{equation*}
        We have deduced analogous conclusions for $\mathcal{A}_{\vec{n},2}$ as  we had for $\mathcal{A}_{\vec{n},1}$.

        We can repeat these arguments inductively and obtain that for each $j=0,\ldots,m-1$ there exists a polynomial  $w_{\vec{n},j},  \deg w_{\vec{n},j}\geq n_1+\cdots+n_{j} = \eta_{\n,j}$ ($w_{\vec n,0} \equiv 1$) whose
        roots lie in $\C \setminus \Delta_{j+1}$ and contain all the points where  $\mathcal{A}_{\vec{n},j}$ changes sign in $ \mathring{\Delta}_{j}$ and \eqref{forma*} takes place.

        For $j=m-1$, we get
        \begin{equation}
            \label{HP_s_mm}
            \frac{a_{\vec n,m}\widehat{s}_{m,m}-a_{\vec n,m-1}}{w_{\vec{n},m-1}}(z) = \mathcal{O}\left(\frac{1}{z^{\eta_{\vec{n},m}+1}}\right) \in \mathcal{H}(\C\setminus\Delta_m)
        \end{equation}
        and using again Lemma \ref{lm:rem}
        \begin{equation*}
            \int x^\nu a_{\vec n,m}(x)\frac{\D s_{m,m}(x)}{w_{\vec{n},m-1}(x)}=0,\qquad \nu =0,1,\ldots,|\vec n|-1.
        \end{equation*}
        This implies that $a_{\vec n,m}$ has at least $|\vec n|$ sign changes in $\mathring{\Delta}_m$. Since $\deg a_{\vec n, m} \leq |\vec n|$ we get that $a_{\vec n,m}$ is either identically equal to zero or it has exactly $|\vec n|$ simple zeros, all in $\mathring{\Delta}_m$. The first situation cannot occur since from \eqref{Problem} it would follow that $(a_{\vec n,0} \ldots,a_{\vec n,m})\equiv \vec{0}$. So, only the second statement is possible.

        Notice that if $\mathcal{A}_{\vec n,0}$ has  a zero in $\C \setminus \Delta_1$, or  for some $j=1,\ldots,m-1$, $\mathcal{A}_{\vec n,j}$ has more than $\eta_{\vec{n},j}$ zeros in $\C \setminus \Delta_{j+1}$, we can get an extra order of interpolation in \eqref{HP_s_mm}. This also occurs if for some $j=0,\ldots,m-1$ the order of intepolation at $\infty$ in \eqref{Problem} is higher than the one imposed.  This entails one more orthogonality relation for $a_{\vec n,m}$ implying that this polynomials is identically equal to zero which is not possible. The statements of the lemma readily follow.
    \end{proof}

    In order to prove Theorem \ref{Convergence}, we need relations similar to \eqref{HP_s_mm} for $a_{\vec n,m}\widehat{s}_{m,j+1}-a_{\vec n,j}$, $j=0,\ldots,m-1$. For this purpose, some transformations involving reciprocals and ratios of Cauchy transforms of measures will be employed. We introduce them next.

    It is well known that for each measure $\sigma\in\mathcal{M}(\Delta)$, where $\Delta \subset \R$ is either a closed bounded interval or it is a half-line (that is, it is unbounded only on one side of the real line), there exist a measure $\tau\in\mathcal{M}(\Delta)$ and a polynomial $\ell(z) = az+b$, $a=1/|\sigma|,b\in\R$, such that
    \begin{equation}
        \label{CT:inv}
        \frac{1}{\widehat{\sigma}(z)} = \ell(z) + \widehat{\tau}(z),
    \end{equation}
    where $|\sigma|$ denotes the total mass of the measure $\sigma$. For more details, in the case of measures with compact support see \cite[Appendix]{Krein_Nudelman} and \cite[Theorem 6.3.5]{Stahl_Totik}, when the measure is supported in a half line see \cite[Lemma 2.3]{ulises_lago_2}. If $\sigma$ satisfies Carleman's condition then $\tau$ also verifies that condition, \cite[Theorem 1.5]{Lago_Sergio}. We call $\tau$ the inverse measure of $\sigma$.

    Inverse measures appear frequently in our developments, so we will fix a notation to differentiate them. In relation with the measures denoted with $s$ they will carry over to them the corresponding sub-indices. The same goes for the polynomials $\ell$. For example:
    \begin{equation*}
        \frac{1}{\widehat{s}_{j,k}(z)} = \ell_{j,k}(z) + \widehat{\tau}_{j,k}(z).
    \end{equation*}
    We also use
    \begin{equation*}
        \frac{1}{\widehat{\sigma}_\alpha(z)} = \ell_\alpha(z) + \widehat{\tau}_\alpha(z).
    \end{equation*}
    On some occasions, we write $\langle \sigma_\alpha,\sigma_\beta \widehat{\rangle}$ in place of $\widehat{s}_{\alpha,\beta}$. In   \cite[Lemma 2.10]{ulises_lago_2} several formulas involving Cauchy transforms were proved. For our reasonings, the most important ones establish that
    \begin{equation}
        \label{CT:ratio}
        \frac{\widehat{s}_{1,k}}{\widehat{s}_{1,1}} = \frac{|s_{1,k}|}{|s_{1,1}|} - \langle \tau_{1,1}, \langle s_{2,k}, \sigma_1 \rangle \widehat{\rangle}.
    \end{equation}

    We also state a formula which connects forms of different levels of Nikishin systems. A proof appears in \cite[Lemma 2.1]{Lago_Sergio_Jacek}. Consider the linear forms with polynomial coefficients
    \begin{equation*}
        \mathcal{L}_j:=a_j + \sum_{k=j+1}^m a_k\Hat{s}_{j+1,k},\qquad j=0,\ldots,m-1, \qquad \mathcal{L}_m=a_m
    \end{equation*}
    where $a_j$ are arbitrary polynomials.
    \begin{lemma}
        \label{lm:levels}
        Let $(s_{1,1},\ldots,s_{1,m}) = \mathcal{N}(\sigma_1,\ldots,\sigma_m)$ be given. Then, for each $j=0,\ldots,m-2$, and $r=j+1,\ldots,m-1$
        \begin{equation}
            \label{levels}
            \mathcal{L}_j + \sum_{k=j+1}^r\widehat{s}_{k,j+1}\mathcal{L}_{k} = a_j + (-1)^{r-j}\sum_{k=r+1}^m a_k\langle s_{r+1,k}, s_{r,j+1}\widehat{\rangle}.
        \end{equation}
    \end{lemma}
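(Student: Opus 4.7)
The natural approach is induction on $r$ (with $j$ fixed), moving from $r=j+1$ up to $r=m-1$. The engine throughout is the symmetric ``product rule'' for Cauchy transforms of Nikishin-type measures,
\begin{equation*}
\widehat{\mu}(z)\,\widehat{\nu}(z) = \widehat{\langle\mu,\nu\rangle}(z) + \widehat{\langle\nu,\mu\rangle}(z),
\end{equation*}
which follows by writing $\frac{1}{(z-x)(z-y)} = \frac{1}{y-x}\bigl(\frac{1}{z-y}-\frac{1}{z-x}\bigr)$ and integrating against $\D\mu(x)\D\nu(y)$. Specialising to $\mu=\sigma_{j+1}$, $\nu=s_{j+2,k}$ gives the nested relation $\widehat{s}_{j+1,k} = \widehat{\sigma}_{j+1}\widehat{s}_{j+2,k} - \widehat{\langle s_{j+2,k},\sigma_{j+1}\rangle}$, which handles the base case $r=j+1$: substituting it into the sum defining $\mathcal{L}_j$ and regrouping the $\widehat{\sigma}_{j+1}$-multiples against $\widehat{s}_{j+1,j+1}\mathcal{L}_{j+1}=\widehat{\sigma}_{j+1}\mathcal{L}_{j+1}$ leaves exactly $a_j$ plus the required $\widehat{\langle s_{j+2,k},\sigma_{j+1}\rangle}$ contributions.

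For the inductive step I would first record the auxiliary identity
\begin{equation*}
\langle s_{r,k},\,s_{r-1,j+1}\rangle \,=\, \langle s_{r,j+1},\,s_{r+1,k}\rangle,
\end{equation*}
valid for $j+1<r<k$. Both measures are supported on $\Delta_r$, and a direct unfolding of the differential notation shows that each equals $\widehat{s}_{r-1,j+1}(x)\,\widehat{s}_{r+1,k}(x)\,\D\sigma_r(x)$. Combining this with the product rule yields
\begin{equation*}
\widehat{\langle s_{r,k},s_{r-1,j+1}\rangle} + \widehat{\langle s_{r+1,k},s_{r,j+1}\rangle} \,=\, \widehat{s}_{r,j+1}\,\widehat{s}_{r+1,k},\qquad k\geq r+1,
\end{equation*}
together with the boundary relation $\widehat{\langle s_{r,r},s_{r-1,j+1}\rangle} = \widehat{\langle\sigma_r,s_{r-1,j+1}\rangle} = \widehat{s}_{r,j+1}$ read off from the definition of the bracket.

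Assuming the lemma at level $r-1$, I would add the next term (with the sign prescribed by \eqref{levels}) to both sides. On the right, the $k=r$ summand collapses via the boundary relation to $a_r\widehat{s}_{r,j+1}$, while each $k\geq r+1$ summand splits through the displayed identity into a piece $\widehat{s}_{r,j+1}\widehat{s}_{r+1,k}$ and the desired new piece $\widehat{\langle s_{r+1,k},s_{r,j+1}\rangle}$. The collected $\widehat{s}_{r,j+1}\widehat{s}_{r+1,k}$ and $a_r\widehat{s}_{r,j+1}$ terms reassemble into $\widehat{s}_{r,j+1}\mathcal{L}_r$ and cancel against the term just introduced; what remains is precisely the right-hand side of \eqref{levels} at level $r$.

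I expect the only real obstacle to be the combinatorial bookkeeping of signs and index ranges: the exponent $(-1)^{r-j}$ forces a careful alternation throughout the induction, and the shift of the ``inner'' Cauchy transform from $s_{r-1,j+1}$ to $s_{r,j+1}$ together with the contraction of the index set $\{r,\dots,m\}$ to $\{r+1,\dots,m\}$ must be tracked term by term, isolating the boundary $k=r$ case. Once the symmetric product rule and the equality $\langle s_{r,k},s_{r-1,j+1}\rangle = \langle s_{r,j+1},s_{r+1,k}\rangle$ are in hand, no further analytic ingredient is needed; the induction reduces to direct algebraic verification.
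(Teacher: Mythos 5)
Your plan is the right one, and it is essentially \emph{the} proof: the paper does not prove Lemma \ref{lm:levels} itself but defers to \cite[Lemma 2.1]{Lago_Sergio_Jacek}, where the argument is exactly the induction on $r$ you describe, driven by the symmetric product rule $\widehat{\mu}\,\widehat{\nu}=\langle\mu,\nu\widehat{\rangle}+\langle\nu,\mu\widehat{\rangle}$ and the commutation identity $\langle s_{r,k},s_{r-1,j+1}\rangle=\langle s_{r,j+1},s_{r+1,k}\rangle$ (both unfold to $\widehat{s}_{r-1,j+1}(x)\,\widehat{s}_{r+1,k}(x)\,\D\sigma_r(x)$ on $\Delta_r$, as you say), together with the boundary relation $\langle\sigma_r,s_{r-1,j+1}\rangle=s_{r,j+1}$.

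The one thing you must not leave implicit is the sign pattern, because a literal reading of \eqref{levels} would derail the induction: as printed, with all plus signs in the left-hand sum, the identity is false. Already for $m=2$, $j=0$, $r=1$ one gets $\mathcal{L}_0+\widehat{s}_{1,1}\mathcal{L}_1=a_0+2a_1\widehat{\sigma}_1+a_2(\widehat{s}_{1,2}+\widehat{\sigma}_1\widehat{\sigma}_2)$, and the term $2a_1\widehat{\sigma}_1$ cannot be absorbed into the right-hand side, which contains no $a_1$. The correct left-hand side is $\mathcal{L}_j+\sum_{k=j+1}^{r}(-1)^{k-j}\widehat{s}_{k,j+1}\mathcal{L}_k$; this alternating form is what the paper actually uses later (for $r=j+1$ it invokes $\mathcal{A}_{\n,j}-\widehat{s}_{j+1,j+1}\mathcal{A}_{\n,j+1}$, with a minus sign). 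Your own computations secretly assume this: in the base case you subtract $\widehat{\sigma}_{j+1}\mathcal{L}_{j+1}$, and in the inductive step the $k=r$ boundary term $(-1)^{r-1-j}a_r\widehat{s}_{r,j+1}$ coming from the induction hypothesis only cancels the newly introduced $a_r\widehat{s}_{r,j+1}$ if the new term enters with the factor $(-1)^{r-j}$, not with $+1$. So state and prove the corrected identity: add $(-1)^{r-j}\widehat{s}_{r,j+1}\mathcal{L}_r$ to the level-$(r-1)$ identity; the $k=r$ summand collapses and cancels, and each $k\geq r+1$ summand combines with $(-1)^{r-j}a_k\widehat{s}_{r,j+1}\widehat{s}_{r+1,k}$ via the product rule and the commutation identity to yield $(-1)^{r-j}a_k\langle s_{r+1,k},s_{r,j+1}\widehat{\rangle}$, exactly as you outline. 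With the signs made explicit the proof closes.
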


    Given $\vec n = (n_1,\ldots,n_{m}) \in \Z_+^m$, set
    \begin{align*}
	    \chi_{\vec n,j,k} &:=\min\{n_j+1,n_{j+1}+2,\ldots, n_k+2\}, \quad j< k.
    \end{align*}
We are ready to prove

    \begin{lemma}
        \label{lm:asym_MP}
        Given $\vec n\in(\Z_+^m)^*$, let $a_{\vec n,0}, a_{\vec n,1},\ldots, a_{\vec n,m}$ be the Hermite-Pad\'e polynomials associated with the Nikishin system $\mathcal{N}(\sigma_1,\ldots,\sigma_m)$ such that \eqref{Problem} holds.
        Then for each $j=0,\ldots, m-2$
        \begin{equation}
            \label{orden}
            \frac{a_{\vec n,j}-a_{\vec n,m}\widehat{s}_{m,j+1}}{w^*_{\vec{n},j}}(z) = \mathcal{O}\left(z^{-\left( \eta_{\vec{n},j+1}+\sum_{k=1}^{m-j-1}\chi_{\vec n,j+1,j+k+1}-2m+2j+3\right)}\right) \in\mathcal{H}(\C\setminus\Delta_m), \quad z\rightarrow\infty,
        \end{equation}
        where $w^*_{\vec{n},j}$ is a monic polynomial with real coefficients of degree $\sum_{k=1}^{m-j-2}\chi_{\vec n,j+1,j+k+1} +\eta_{\vec{n},j+1}-2m+2j+3$ (the sum is empty when $j=m-2$). The polynomial $a_{\vec n,j}, j=0,\ldots,m-2,$ has at least $\sum_{k=1}^{m-j-1}\chi_{\vec n,j+1,j+k+1} +\eta_{\vec{n},j+1}-2m+2j+1$ sign changes in $\mathring{\Delta}_m$.
    \end{lemma}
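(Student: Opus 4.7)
The plan is to first express $a_{\vec n,j}-a_{\vec n,m}\widehat{s}_{m,j+1}$ as a linear combination of the forms $\mathcal{A}_{\vec n,k}$ via Lemma \ref{lm:levels} (which automatically yields holomorphy off $\Delta_m$ together with a baseline decay at infinity), and then to boost that decay and construct $w^*_{\vec n,j}$ by iterating Lemma \ref{lm:rem} in the reversed Nikishin system $\mathcal{N}(\sigma_m,\ldots,\sigma_{j+1})$, using \eqref{CT:inv} and \eqref{CT:ratio} between iterations to handle the reciprocals and ratios of Cauchy transforms that appear.

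Concretely, setting $b_{\vec n,k}:=(-1)^k a_{\vec n,k}$ puts each $\mathcal{A}_{\vec n,i}$ in the form $\mathcal{L}_i$ required by Lemma \ref{lm:levels}. Specializing \eqref{levels} to $r=m-1$ and using $\langle s_{m,m},s_{m-1,j+1}\widehat{\rangle}=\widehat{s}_{m,j+1}$, the right-hand side collapses to $(-1)^j(a_{\vec n,j}-a_{\vec n,m}\widehat{s}_{m,j+1})$, yielding
\[
(-1)^j\bigl(a_{\vec n,j}-a_{\vec n,m}\widehat{s}_{m,j+1}\bigr)=\mathcal{A}_{\vec n,j}+\sum_{k=j+1}^{m-1}\widehat{s}_{k,j+1}\,\mathcal{A}_{\vec n,k}.
\]
Since the left-hand side is holomorphic on $\C\setminus\Delta_m$, the singularities on $\Delta_{j+1},\ldots,\Delta_{m-1}$ of the individual right-hand summands must cancel out. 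Combining $\mathcal{A}_{\vec n,k}(z)=\mathcal{O}(z^{-n_{k+1}-1})$ from Definition \ref{HP} with $\widehat{s}_{k,j+1}(z)=\mathcal{O}(z^{-1})$ then gives the preliminary estimate $\mathcal{O}(z^{-\chi_{\vec n,j+1,m}})$ at infinity, which supplies the outermost $\chi$ in the exponent of \eqref{orden}.

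To capture the remaining factor of $\deg w^*_{\vec n,j}$, view the left-hand side as a two-term linear form in the reversed Nikishin system $\mathcal{N}(\sigma_m,\ldots,\sigma_{j+1})$ whose only non-zero coefficients are $a_{\vec n,j}$ at the outermost level and $-a_{\vec n,m}$ at the innermost level, and iterate Lemma \ref{lm:rem} down this tower $m-j-1$ times. At each step Lemma \ref{lm:rem} produces (i) sign changes of the current reduced form in $\mathring{\Delta}_m$, which are absorbed into a growing monic polynomial factor (the eventual $w^*_{\vec n,j}$), and (ii) one further orthogonality condition improving the order at infinity. Between consecutive steps, \eqref{CT:inv} and \eqref{CT:ratio} rewrite the residual reciprocals and ratios of Cauchy transforms so that the hypotheses of Lemma \ref{lm:rem} persist, and this is where the $+1$ versus $+2$ alternation inside $\chi_{\vec n,j+1,j+k+1}$ is generated. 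After $m-j-1$ iterations the accumulated polynomial has degree $\sum_{k=1}^{m-j-2}\chi_{\vec n,j+1,j+k+1}+\eta_{\vec n,j+1}-2m+2j+3$ and the total decay matches the exponent in \eqref{orden}; the terminal orthogonality, of the form $\int x^\nu a_{\vec n,j}(x)\,d\mu_{\vec n,j}(x)=0$ against a sign-definite measure on $\Delta_m$, then forces $a_{\vec n,j}$ to have at least the asserted number of sign changes in $\mathring{\Delta}_m$.

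The main obstacle is the degree and sign bookkeeping through the chain \eqref{CT:inv}--\eqref{CT:ratio}: every reciprocal contributes a degree-one polynomial piece $\ell_{\cdot,\cdot}$ and every ratio contributes a Cauchy transform of an inverse or nested measure, so one has to verify that none of these manipulations introduces a spurious pole off $\Delta_m$ or spoils the constant sign of the measures on $\mathring{\Delta}_m$. The cumulative constant $-2m+2j+3$ in the degree of $w^*_{\vec n,j}$ is precisely the net arithmetic outcome of $m-j-1$ such applications.
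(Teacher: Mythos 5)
Your opening move is correct and matches a computation the paper uses implicitly at the last stage of its induction: with $a_k=(-1)^ka_{\vec n,k}$, Lemma \ref{lm:levels} at $r=m-1$ does give
\begin{equation*}
(-1)^j\bigl(a_{\vec n,j}-a_{\vec n,m}\widehat{s}_{m,j+1}\bigr)=\mathcal{A}_{\vec n,j}+\sum_{k=j+1}^{m-1}\widehat{s}_{k,j+1}\,\mathcal{A}_{\vec n,k},
\end{equation*}
hence holomorphy off $\Delta_m$ and the order $\mathcal{O}(z^{-\chi_{\vec n,j+1,m}})$ at infinity. But that is only the last of the $m-j$ summands in the exponent of \eqref{orden}, and the step you propose to bridge the gap does not work. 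Lemma \ref{lm:rem} never \emph{improves} the order of vanishing at infinity: it takes the order $N$ you already have as a hypothesis and returns $N-1$ orthogonality relations (hence sign changes) for the form one level down. Starting from the collapsed identity, whose only available order is $\chi_{\vec n,j+1,m}\approx\min_k n_k$, no amount of iterating Lemma \ref{lm:rem} on the reversed tower can bootstrap up to order $\eta_{\vec n,j+1}+\sum_{k=1}^{m-j-1}\chi_{\vec n,j+1,j+k+1}-2m+2j+3$, which is of size comparable to $2|\vec n|$. The extra decay in \eqref{orden} comes from dividing by $w^*_{\vec n,j}$, whose zeros must be exhibited as actual zeros of the numerator on the intermediate intervals, and your collapsed identity has already discarded the two pieces of information that produce them: (a) the $\eta_{\vec n,j+1}$ orthogonality relations carried by $\mathcal{A}_{\vec n,j}/(\widehat{\sigma}_{j+1}w_{\vec n,j})$ via \eqref{forma*}, which seed the count, and (b) the interpolation conditions of the inner levels, which must be injected one at a time through Lemma \ref{lm:levels} with $r=j+1,j+2,\ldots,m-1$ (each $r$ contributing one $\chi_{\vec n,j+1,r+1}$). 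The paper's proof is an alternation in the \emph{forward} direction: order at infinity $\Rightarrow$ orthogonality on $\Delta_{j+k}$ $\Rightarrow$ sign changes $\Rightarrow$ a new polynomial divisor $w_{\vec n,j,k}$ of degree equal to the previous order minus one $\Rightarrow$ improved order after Lemma \ref{lm:levels} supplies the next $\chi$; the transitions between steps are where \eqref{CT:inv} and \eqref{CT:ratio} are used, as you anticipated.

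A secondary but consequential error: you place the accumulated sign changes, hence the zeros of $w^*_{\vec n,j}$, in $\mathring{\Delta}_m$. In fact the intermediate divisors $w_{\vec n,j,k}$ have their zeros in $\mathring{\Delta}_{j+k}$, and $w^*_{\vec n,j}=w_{\vec n,j,m-j-1}$ has its zeros in $\mathring{\Delta}_{m-1}$. This matters for the last assertion of the lemma: the terminal orthogonality
\begin{equation*}
\int x^{\nu}a_{\vec n,j}(x)\,\frac{\D\tau_{m,j+1}(x)}{w^*_{\vec n,j}(x)}=0
\end{equation*}
yields the stated number of sign changes of $a_{\vec n,j}$ in $\mathring{\Delta}_m$ only because $w^*_{\vec n,j}$ has constant sign on $\Delta_m$, which fails if its zeros lie there.
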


    \begin{proof}
     Fix $j \in \{0,\ldots,m-2\}$, using  \eqref{CT:inv} and \eqref{CT:ratio}, we have
        \begin{multline*}
            \frac{\mathcal{A}_{\vec{n},j}}{\widehat{\sigma}_{j+1}} = \left((-1)^j \ell_{j+1}a_{\vec n,j} + \sum_{k=j+1}^{m}(-1)^k \frac{|s_{j+1,k}|}{|\sigma_{j+1}|}a_{\vec n,k}\right)\\ + (-1)^j a_{\vec n,j}\widehat{\tau}_{j+1} - \sum_{k=j+2}^m (-1)^k a_{\vec n,k}\langle \tau_{j+1},\langle s_{j+2,k},\sigma_{j+1}\rangle\widehat{\rangle}.
        \end{multline*}
        The quotient $\frac{\mathcal{A}_{\vec{n},j}}{\widehat{\sigma}_{j+1}}$ has the same structure as $\mathcal{A}$ in Lemma \ref{lm:rem}. Moreover, from \eqref{forma*},
        \begin{equation*}
              \frac{\mathcal{A}_{\vec{n},j}(z)}{(\widehat{\sigma}_{j+1}w_{\vec{n},j})(z)} = \mathcal{O}\left(\frac{1}{z^{\eta_{\vec{n},j+1}}}\right) \in \mathcal{H}(\C\setminus\Delta_{j+1})
        \end{equation*}
        and, as a consequence of \eqref{A:orth}, for $\nu=0,\ldots,\eta_{\vec{n},j+1}-2$, we obtain the orthogonality relations
        \begin{equation*}
            0 = \int_{\Delta_{j+1}} x^\nu \left((-1)^j a_{\vec n,j} - \sum_{k=j+2}^m (-1)^k a_{\vec n,k}\langle s_{j+2,k},\sigma_{j+1}\widehat{\rangle}\right)(x) \frac{\D \tau_{j+1}(x)}{w_{\vec{n},j}(x)}.
        \end{equation*}
        Therefore, the  expression in parenthesis under the integral sign has at least $\eta_{\vec{n},j+1}-1$ sign changes in $\mathring{\Delta}_{j+1}$. Thus, there exists a polynomial $w_{\vec{n},j,1}$ of degree $\eta_{\vec{n},j+1}-1$ whose zeros are simple and lie in $\mathring{\Delta}_{j+1}$ such that
        $$\frac{1}{w_{\vec{n},j,1}}\left((-1)^j a_{\vec n,j} - \sum_{k=j+2}^m (-1)^k a_{\vec n,k}\langle s_{j+2,k},\sigma_{j+1}\widehat{\rangle}\right)\in\mathcal{H}(\C\setminus\Delta_{j+2}).$$

        We can use Lemma \ref{lm:levels} choosing $r=j+1$ and obtain
        \begin{equation*}
            \mathcal{A}_{\vec{n},j}-\widehat{s}_{j+1,j+1}\mathcal{A}_{\vec{n},j+1} = (-1)^j a_{\vec n,j} - \sum_{k=j+2}^m (-1)^k a_{\vec n,k}\langle s_{j+2,k},\sigma_{j+1}\widehat{\rangle}.
        \end{equation*}
        From \eqref{Problem} we know that $\mathcal{A}_{\vec{n},j}-\widehat{s}_{j+1,j+1}\mathcal{A}_{\vec{n},j+1}$ is $\mathcal{O}\left(z^{-\min\{n_{j+1}+1, n_{j+2}+2\}}\right)$, $z\to \infty$. Hence,
        \begin{equation*}
            \frac{1}{w_{\vec{n},j,1}(z)}\left((-1)^j a_{\vec n,j} - \sum_{k=j+2}^m (-1)^k a_{\vec n,k}\langle s_{j+2,k},\sigma_{j+1}\widehat{\rangle}\right)(z) = \mathcal{O}\left(\frac{1}{z^{\chi_{\vec n,j+1,j+2}+\eta_{\vec{n},j+1}-1}}\right).
        \end{equation*}
        Notice that if $j=m-2$ we obtain
        \begin{equation*}
            \frac{a_{\vec n,m-2}-a_{\vec n,m}\widehat{s}_{m,m-1}}{w_{\vec{n},m-2,1}}(z) = \mathcal{O}\left(\frac{1}{z^{\chi_{\vec n,m-1,m}+\eta_{\n,m-1}-1}}\right)
        \end{equation*}
        which is \eqref{orden} for this value of $j$ taking $w^*_{\vec n,m-2} = w_{\vec n,m-2,1} $.

        Using the identity $\langle s_{j+2,k}, s_{j+1,j+1} \rangle = \langle s_{j+2,j+1},s_{j+3,k}\rangle$ for $k=j+3,\ldots,m$, we deduce
        \begin{multline}
            (-1)^j a_{\vec n,j} - \sum_{k=j+2}^m (-1)^k a_{\vec n,k}\langle s_{j+2,k},\sigma_{j+1}\widehat{\rangle}\\ = (-1)^{j}a_{\vec n,j} - (-1)^{j+2}a_{\vec n,j+2}\widehat{s}_{j+2,j+1} - \sum_{k=j+3}^m (-1)^k a_{\vec n,k} \langle s_{j+2,j+1}, s_{j+3,k}\widehat{\rangle}.\label{Prep_Elim}
        \end{multline}
        We wish to eliminate  the term with $\widehat{s}_{j+2,j+1}$ from the right hand side of \eqref{Prep_Elim}; therefore, we divide both sides of \eqref{Prep_Elim} by $\widehat{s}_{j+2,j+1}$ and use again \eqref{CT:inv} and \eqref{CT:ratio}. The right hand side becomes
        \begin{multline*}
            \left( (-1)^j a_{\vec n,j}\ell_{j+2,j+1}-(-1)^{j+2}a_{\vec n,j+2} - \sum_{k=j+3}^m (-1)^k\frac{|\langle s_{j+2,j+1},s_{j+3,k} \rangle|}{|s_{j+2,j+1}|}a_{\vec n,k}\right) +\\
            (-1)^j a_{\vec n,j}\widehat{\tau}_{j+2,j+1} + \sum_{k=j+3}^m (-1)^k a_{\vec n,k}\langle \tau_{j+2,j+1}, \langle s_{j+3,k}, s_{j+2,j+1}\rangle\widehat{\rangle}
        \end{multline*}
        which is a linear form like $\mathcal{A}$ in Lemma \ref{lm:rem}, and
        \begin{multline*}
            \frac{1}{(w_{\vec{n},j,1}\widehat{s}_{j+2,j+1})(z)}\left((-1)^j a_{\vec n,j} - \sum_{k=j+2}^m (-1)^k a_{\vec n,k}\langle  s_{j+2,k}, \sigma_{j+1}\widehat{\rangle}\right)(z) =\\
            \mathcal{O}\left(\frac{1}{z^{\chi_{\vec n,j+1,j+2}+\eta_{\vec{n},j+1}-2}}\right) \in \mathcal{H}(\C\setminus\Delta_{j+2}).
        \end{multline*}
        Therefore, for $\nu=0,1,\ldots, \chi_{\vec n,j+1,j+2}+\eta_{\vec{n},j+1}-4$,
        \begin{equation*}
            \int x^\nu \left((-1)^j a_{\vec n,j} + \sum_{k=j+3}^m (-1)^k a_{\vec n,k}\langle  s_{j+3,k}, s_{j+2,j+1}\widehat{\rangle}\right)(x)\frac{\D \tau_{j+2,j+1}(x)}{w_{\vec{n},j.1}(x)} =0.
        \end{equation*}
        So, the expression in parenthesis under the integral sign has at least $\chi_{\vec n, j+1,j+2}+\eta_{\vec{n},j+1}-3$ sign changes in $\mathring{\Delta}_{j+2}$, and we can guarantee the existence of a polynomial $w_{\vec{n},j,2}$, $\deg w_{\vec{n},j,2}=\chi_{\vec n,j+1,j+2}+\eta_{\vec{n},j+1}-3$, with simple zeros located   inside $\Delta_{j+2}$ such that
        \begin{equation*}
            \frac{1}{w_{\vec{n},j,2}}\left((-1)^j a_{\vec n,j} + \sum_{k=j+3}^m (-1)^k a_{\vec n,k}\langle  s_{j+3,k}, s_{j+2,j+1}\widehat{\rangle}\right)\in\mathcal{H}(\C\setminus\Delta_{j+3}).
        \end{equation*}
        On the other hand, using Lemma \ref{lm:levels} with $r=j+2$ and the definition of ML Hermite-Pad\'e approximant, we get
        \begin{multline*}
             \mathcal{A}_{\vec{n},j} - \widehat{s}_{j+1,j+1}\mathcal{A}_{\vec{n},j+1} + \widehat{s}_{j+1,j+1}\mathcal{A}_{\vec{n},j+2} = \\
             (-1)^j a_{\vec n,j} + \sum_{k=j+3}^m (-1)^k a_{\vec n,k}\langle  s_{j+3,k}, s_{j+2,j+1}\widehat{\rangle} \in \mathcal{O}\left(\frac{1}{z^{\chi_{\n,j+1,j+3}}}\right).
        \end{multline*}
        Thus,
        \begin{equation*}
            \frac{1}{w_{\vec{n},j,2}}\left((-1)^j a_{\vec n,j} + \sum_{k=j+3}^m (-1)^k a_{\vec n,k}\langle  s_{j+3,k}, s_{j+2,j+1}\widehat{\rangle}\right)\in\mathcal{O}\left(\frac{1}{z^{\chi_{\vec n,j+1,j+3}+\chi_{\vec n,j+1,j+2}+\eta_{\vec{n},j+1}-3}}\right).
        \end{equation*}
        In particular, if $j=m-3$, we get
        \begin{equation*}
            \frac{(a_{\vec n,m-3}-a_{\vec n,m}\widehat{s}_{m,m-2})(z)}{w_{\vec{n},m-3,2}(z)}=\mathcal{O}\left(\frac{1}{z^{\chi_{\vec n,m-2,m}+\chi_{\vec n,m-2,m-1}+\eta_{m-2}-3}}\right),
        \end{equation*}
        which gives us \eqref{orden} with $w^*_{\n,m-3} = w_{\vec{n},m-3,2}$ when $j=m-3$.

        This process can be continued inductively, and after $m-j-1$ reductions we guarantee the existence of a polynomial $w_{\vec{n},j,m-j-1}$ of degree $\sum_{k=1}^{m-j-2}\chi_{\vec n,j+1,j+k+1} +\eta_{\vec{n},j+1}-2m+2j+3$ with simple zeros in  $\mathring{\Delta}_{m-1}$ such that
        \begin{equation*}
            %\label{sign_changes}
            \frac{a_{\vec n,j}-a_{\vec n,m}\widehat{s}_{m,j+1}}{w_{\vec{n},j,m-j-1}}(z) = \mathcal{O}\left(z^{-\left(\sum_{k=1}^{m-j-1}\chi_{\vec n,j+1,j+k+1} +\eta_{\vec{n},j+1}-2m+2j+3\right)}\right) \in\mathcal{H}(\C\setminus\Delta_m), \quad z\rightarrow\infty,
        \end{equation*}
        which allows us to deduce \eqref{orden} taking $w^*_{\n,j} = w_{\vec{n},j,m-j-1}$.

        As an immediate consequence we have
        \begin{equation*}
            \frac{a_{\vec n,j}-a_{\vec n,m}\widehat{s}_{m,j+1}}{\widehat{s}_{m,j+1}w^*_{\vec{n},j}}(z) =  \mathcal{O}\left(z^{-\left(\sum_{k=1}^{m-j-1}\chi_{\vec n,j+1,j+k+1} +\eta_{\vec{n},j+1}-2m+2j+2\right)}\right) \in\mathcal{H}(\C\setminus\Delta_m), \qquad z\rightarrow\infty,
        \end{equation*}
        but
        \begin{equation*}
            \frac{a_{\vec n,j}-a_{\vec n,m}\widehat{s}_{m,j+1}}{\widehat{s}_{m,j+1}} = a_{\vec n,j}\widehat{\tau}_{m,j+1} - (a_{\vec n,m}-\ell_{m,j+1}a_{\vec n,j}).
        \end{equation*}
        Hence,
        \begin{equation*}
            \int x^\nu a_{\vec n,j}(x)\frac{\D\tau_{m,j+1}(x)}{w^*_{\vec{n},j}(x)} =0, \qquad \nu=0,1,\ldots,\sum_{k=1}^{m-j-1}\chi_{\vec n,j,j+k} +\eta_{\vec{n},j}-2m+2j,
        \end{equation*}
        and the polynomial $a_{\vec n,j}$ has at least $\sum_{k=1}^{m-j-1}\chi_{\vec n,j+1,j+k+1} +\eta_{\vec{n},j+1}-2m+2j+1$ sign changes in $\mathring{\Delta}_m$ which is the last statement of the lemma.
    \end{proof}

\subsection{Proof of Theorem \ref{Convergence}}
    Let us begin with the simplest case when $j=m-1$. Let $\Lambda \subset (\Z_+^m)^*$ be an arbitrary sequence of multi-indices. According to \eqref{forma*} (recall that $n_1 + \cdots +n_{m} = |\n|$)
    \[
    \frac{a_{\vec n,m-1}-a_{\vec n,m}\widehat{s}_{m,m}}{w_{\vec{n},m-1}}(z) =
    \mathcal{O}\left(\frac{1}{z^{\left( |\n| + 1\right)}}\right) \in\mathcal{H}(\C\setminus\Delta_m), \quad z\rightarrow\infty,
    \]
    and $\deg w_{\vec{n},m-1} = \eta_{\n,m-1} < 2|\vec n|$. Since $\deg a_{\vec n,m} \leq |\vec n|, \deg a_{\vec n,m-1} \leq |\vec n|-1$, It follows that $a_{\vec n,m-1}/a_{\vec n,m}$ is the standard multipoint Pad\'e approximant of $\widehat{s}_{m,m}$ with respect to $w_{\vec{n},m-1}$ (see \cite{lago79}). This implies that $a_{\n,m}$ is the $|\n|$-th monic orthogonal polynomial with respect to $\D s_{m,m}/w_{\vec{n},m-1}$ and $a_{\n,m-1}$ the corresponding polynomial of second kind. This implies that the zeros of these polynomials lie in $\mathring{\Delta}_m$ and interlace. Now, \eqref{markov} for $j=m-1$ readily follows from \cite[Theorem 1]{lago79} (see \cite[Corollary 1]{lago79})  in the case that the sequence of moments of $\sigma_m$ verifies Carleman's condition. When $\Delta_{m-1}$ is a compact interval bounded away from $\Delta_m$ and $\lim_{\vec n \in \Lambda} \eta_{\vec n,m-1} = \infty$ then the number of interpolation conditions on $\Delta_{m-1}$ (at the zeros of $w_{\vec n,m-1}$) suffice to guarantee the convergence of the sequence of approximants, which follows from (\cite[Theorem 1, Corollary 2]{lago79}).

    For other values of $j$, there is some defect in the order of interpolation on the right hand of $\eqref{orden}$ and we cannot ensure that $a_{\vec n,j}/a_{\vec n,m}$ is an $|\n|$-th multipoint Pad\'e approximant. That is the reason for restricting the sequence of multi-indices in the first part of the statement of Theorem \ref{Convergence}.

    In the sequel, we assume that $\Lambda \subset (\Z_+^{m})^*$ is an infinite sequence of distinct multi-indices such that there exist $\ell\in \{0,\ldots,m-2\}$ and a   non-negative integer $N$ such that $n_{j+1} \leq n_{j} + N$  for all $\ell+1 \leq j \leq m-1$. In this case, we automaticaly have that $\lim_{\vec n \in \Lambda} \eta_{\vec n,m-1} = +\infty$. Indeed, assume that $\limsup_{\vec n \in \Lambda} \eta_{\vec n,m-1} < +\infty$. In particular, this implies that there exists a constant $C$ such that $n_{m-1} \leq C, \vec n \in \Lambda$. However, $\lim_{\vec n \in \Lambda} n_m = +\infty$ because $\lim_{\vec n \in \Lambda}|\vec n| = \infty$ since the multi-indices are distinct; therefore, it is impossible that $n_{m} \leq n_{m-1} + N, \vec n \in \Lambda$.

    For $j=m-1$ the proof of \eqref{markov} was carried out above. Fix $j\in \{\ell,\ldots,m-2\}$. We have
    \begin{align*}
        \deg{w_{\n,j}^*} =& \sum_{k=1}^{m-j-2}\chi_{\vec n,j+1,j+k+1} +\eta_{\vec{n},j+1}-2m+2j+3\\
                      \leq& \eta_{\vec{n},j+1} -2m+2j+3 + \sum_{k=1}^{m-j-2} (n_{j+k+1} + 2)\\
                         =& \eta_{\n,m-1} -2m+2j+3 +2(m-j-2) \leq \eta_{\n,m-1} -1 < 2|\n|
    \end{align*}
    for all $\n \in \Lambda$. Due to the assumptions imposed of the sequence $\Lambda$, we have
    \[ n_{j+k+1} \leq n_{j+k} + N \leq \cdots \leq n_{j+1} + kN.
    \]
    Therefore,
    \[ \chi_{\n,j+1,j+k+1} \geq \min\{n_{j+1},\ldots,n_{j+k+1}\} \geq  n_{j+k+1} - kN
    \]
    Consequently,
    \begin{equation}
        \label{numberzeros}
        \eta_{\vec{n},j+1}+\sum_{k=1}^{m-j-1}\chi_{\vec n,j+1,j+k+1}-2m+2j+1 \geq |\n|-2m  - N\sum_{k=1}^{m-j-1} k \geq |\n|-2m- N\frac{m(m+1)}{2}.
    \end{equation}
    Combined with the last statement of Lemma \ref{lm:asym_MP} this inequality gives the lower bound on the number of sign changes of $a_{\n,j}$ on $\mathring{\Delta}_m$.

        From \eqref{numberzeros} and \eqref{orden}, it follows that there exists a constant $\kappa \in \Z_+$  such that for all $\n \in \Lambda$ and $j = \ell,\ldots,m-2$
        \begin{equation}
            \label{multiPade}
            \frac{a_{\vec n,j}-a_{\vec n,m}\widehat{s}_{m,j+1}}{w^*_{\vec{n},j}}(z) = \mathcal{O}\left(\frac{1}{z^{|\n| +1 - \kappa}} \right) \in\mathcal{H}(\C\setminus\Delta_m), \qquad z\rightarrow\infty.
        \end{equation}
        We also have $\deg a_{\n,j} \leq |\n|-1, \deg a_{\n,m} \leq |\n|$ and $\deg w_{\n,m}^* \leq 2|\n|$. This means that for each fixed $j, \, \ell \leq j \leq m-2$, $\left(\frac{a_{\n,j}}{a_{\n.m}}\right)_{\n \in \Lambda}$ is a sequence of ``incomplete'' diagonal multipoint Pad\'e approximants of $\widehat{s}_{m.j+1}$ which satisfies \eqref{multiPade}. It is easy to verify that if the sequence of moments of $\sigma_m$ verifies Carleman's condition then for all $j, 0\leq j \leq m-1,$ the sequence of moments of $s_{m,j+1}$ also verifies Carleman's condition. Also, recall that in the present situation $\lim_{\vec n \in \Lambda} \eta_{\vec n,m-1} = +\infty$ takes place. Using the assumptions imposed on the moments of $\sigma_m$ or on $\Delta_{m-1}$ from \cite[Lemma 2]{Bust_Lago}, it follows that  $\left(\frac{a_{\n,j}}{a_{\n.m}}\right)_{\n \in \Lambda}$ converges to $\widehat{s}_{m,j+1}$ in $1$-Hausdorff content on each compact subset of $\overline{\C} \setminus \Delta_m$.
        Let us explain what convergence in $1$-Hausdorff content means.

        Let $A$ be a subset of $\C$. By $\mathcal{U}(A)$ we denote the class of all coverings of $A$ by at most a numerable set of disks. Set
        \begin{equation*}
            h(A) = \inf\left\{ \sum_{i=1}^\infty |U_i| \mid \{U_i\}\in\mathcal{U}(A)\right\},
        \end{equation*}
        where $|U_i|$ stands for the radius of the disk $U_i$. The quantity $h(A)$ is called the 1-dimensional Hausdorff content of the set $A$. Convergene in $1$-Hausdorff content means that for each compact  $K \subset \C \setminus \Delta_m$ and for each $\varepsilon>0$, we have
        \begin{equation}
            \label{convH}
            \lim_{\n \in \Lambda} h\left\{ z\in K : \left|\frac{a_{\n,j}(z)}{a_{\n.m}(z)}-\widehat{s}_{m,j+1}(z)\right|>\varepsilon \right\} =0
        \end{equation}

        The rational functions $\frac{a_{\n,j}}{a_{\n.m} }$ are holomorphic in $\C \setminus \Delta_m$ because the zeros of $a_{\n,m}$ lie in $\Delta_m$.  This together with \eqref{convH} imply that the convergence is uniform on each compact subset of $\C \setminus \Delta_m$ according to \cite[Lemma 1]{gonchar}.

        If $\Delta_m$ is bounded, we still have to consider those compact subsets of $\overline{\C} \setminus \Delta_m$ which contain $\infty$. Due to the fact that the rational functions   and $\widehat{s}_{m,j+1}$ equal zero at $\infty$ this situation is obtained from the general case using the maximum principle. The proof is complete. \hfill $\Box$

    Due to the last statement of Lemma \ref{lm:asym_MP}, \eqref{numberzeros} gives a lower bound on the number of zeros which $a_{\n,j}$ has in $\mathring{\Delta}_m$ when $\n \in \Lambda$ and $\Lambda$ verifies the conditions of Theorem \ref{Convergence}. If we impose greater restrictions on $\Lambda$ more can be said in this regard.

    \begin{theorem}
        \label{cor:comp}
        Let $\Lambda \subset (\Z_+^{m})^*$ be an infinite sequence of distinct multi-indices for which there exists $\ell\in \{0,\ldots,m-2\}$  such that $n_{j} \geq n_{j+1} + 1$  for all $\ell +1\leq j \leq m-1$ and $\vec n \in \Lambda$. Consider the sequence of vector polynomials $(a_{\vec n,0},\ldots,a_{\vec n,m})_{\vec n \in \Lambda}$ associated with  $\mathcal{N}(\sigma_1,\ldots,\sigma_m)$. Then, $a_{\n,j}, j=\ell,\ldots,m-1,$ has exactly $|\n| -1$ simple zeros which interlace the zeros of $a_{\n,m}$.
    \end{theorem}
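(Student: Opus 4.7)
The plan is to strengthen the conclusions of Lemma~\ref{lm:asym_MP} by exploiting the strict decrease of the components, and then recognize $(a_{\n,j},a_{\n,m})$ as a genuine multipoint Pad\'e pair for the Markov function $\widehat{s}_{m,j+1}$ on $\Delta_m$.

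First I would check that under the hypothesis $n_j \geq n_{j+1}+1$, each minimum in the definition of $\chi_{\n,j+1,j+k+1}$ is attained at $n_{j+k+1}+2$. Indeed, strict decrease gives $n_{j+1}+1 \geq n_{j+k+1}+k+1 \geq n_{j+k+1}+2$ for $k\geq 1$, and the chain $n_{j+2}+2 \geq n_{j+3}+2 \geq \cdots \geq n_{j+k+1}+2$ is non-increasing; hence $\chi_{\n,j+1,j+k+1} = n_{j+k+1}+2$. A direct substitution then yields
\[ \sum_{k=1}^{m-j-1}\chi_{\n,j+1,j+k+1} + \eta_{\n,j+1} - 2m + 2j + 3 \;=\; |\n|+1, \]
so that \eqref{orden} improves to
\[ \frac{a_{\n,j}(z) - a_{\n,m}(z)\widehat{s}_{m,j+1}(z)}{w^*_{\n,j}(z)} \;=\; \mathcal{O}\!\left(\frac{1}{z^{|\n|+1}}\right) \;\in\; \mathcal{H}(\C\setminus\Delta_m), \]
with $\deg w^*_{\n,j} = |\n|-n_m-1$. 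The last statement of Lemma~\ref{lm:asym_MP} then guarantees at least $|\n|-1$ sign changes of $a_{\n,j}$ in $\mathring{\Delta}_m$; combined with $\deg a_{\n,j}\leq |\n|-1$, this forces $a_{\n,j}$ to have exactly $|\n|-1$ simple zeros, all lying in $\mathring{\Delta}_m$ (the case $j=m-1$ is already contained in Lemma~\ref{lm:zeros}).

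Next, expanding the previous display at infinity and matching orders of decay yields the $|\n|$ orthogonality relations
\[ \int_{\Delta_m} x^\nu\, a_{\n,m}(x)\, \frac{ds_{m,j+1}(x)}{w^*_{\n,j}(x)} \;=\; 0, \qquad \nu = 0,1,\ldots,|\n|-1. \]
Because the zeros of $w^*_{\n,j}$ lie in $\mathring{\Delta}_{m-1}$ and consecutive intervals of the Nikishin system share at most one point, $w^*_{\n,j}$ has constant sign on $\Delta_m$, so $|ds_{m,j+1}/w^*_{\n,j}|$ is a positive Borel measure and $a_{\n,m}$ is, up to a constant, its monic orthogonal polynomial of degree $|\n|$. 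Accordingly $a_{\n,j}/a_{\n,m}$ is a diagonal multipoint Pad\'e approximant of $\widehat{s}_{m,j+1}$ with nodes at the zeros of $w^*_{\n,j}$ and at infinity, and the interlacing of the $|\n|-1$ zeros of $a_{\n,j}$ with the $|\n|$ zeros of $a_{\n,m}$ follows from the classical theory of such approximants (cf.~\cite[Corollary~1]{lago79}). Equivalently, using the Sokhotski--Plemelj representation of $R := (a_{\n,j} - a_{\n,m}\widehat{s}_{m,j+1})/w^*_{\n,j}$ and evaluating at each zero $x_i$ of $a_{\n,m}$, one obtains
\[ a_{\n,j}(x_i) \;=\; w^*_{\n,j}(x_i)\, a'_{\n,m}(x_i)\, \lambda_i, \]
where $\lambda_i > 0$ is the corresponding Christoffel number; since $w^*_{\n,j}$ has a constant sign on $\Delta_m$ and $a'_{\n,m}(x_i)$ alternates sign at successive zeros of $a_{\n,m}$, so does $a_{\n,j}(x_i)$, giving the required interlacing.

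The delicate point is the combinatorial identity in the first step: only under the strict decrease $n_j\geq n_{j+1}+1$ do all the $\chi_{\n,j+1,j+k+1}$'s hit the value $n_{j+k+1}+2$, thereby pushing both the interpolation order up to $|\n|+1$ and the number of orthogonality conditions for $a_{\n,m}$ up to the full $|\n|$. Once this bookkeeping is in place, the theorem reduces to the standard interlacing statement for orthogonal polynomials and their functions of the second kind with respect to a positive weight supported on $\Delta_m$.
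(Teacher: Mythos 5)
Your proposal is correct and follows essentially the same route as the paper: verify that the strict decrease forces $\chi_{\n,j+1,k}=n_k+2$, so the interpolation order in \eqref{orden} rises to $|\n|+1$, recognize $a_{\n,j}/a_{\n,m}$ as the $|\n|$-th diagonal multipoint Pad\'e approximant of $\widehat{s}_{m,j+1}$, and conclude interlacing from $a_{\n,m}$ being orthogonal with respect to $\D s_{m,j+1}/w^*_{\n,j}$ with $a_{\n,j}$ its second-kind polynomial. The explicit Christoffel-number sign-alternation argument you add is a harmless elaboration of the ``classical theory'' step the paper cites.
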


    \begin{proof}
        We prove this by showing that for all $j = \ell,\ldots,m-1$ and $\n \in \Lambda$, the rational function $a_{\n,j}/a_{\n,m}$ is a diagonal multipoint Pad\'e approximant of $\widehat{s}_{m.j+1}$. Due to \eqref{orden} we achieve this if we show that
        \begin{equation*}
            \sum_{k=1}^{m-j-1}\chi_{\vec n,j+1,j+k+1} +\eta_{\vec{n},j+1}-2m+2j+3 = |\n|-l+1 \quad \textrm{ with } \quad l=0.
        \end{equation*}
        Notice that
        \begin{equation*}
            \sum_{k=1}^{m-j-1}\chi_{\vec n,j+1,j+k+1} +\eta_{\vec{n},j+1}-2m+2j+3 = \sum_{k=1}^{m-j-1}\chi_{\vec n,j+1,j+k+1} + |\n| -\sum_{i=j+2}^{m} n_i -2m+2j +3.
        \end{equation*}
        Combining these two relations, cancelling out common terms and making a change of parameter in the indices of the sums, we obtain the equation
        \begin{equation*}
            l= 2(m-j-1)- \sum_{k=j+2}^{m}(\chi_{\vec n,j+1,k}-n_{k}).
        \end{equation*}
        Taking into account that $n_j\geq n_{j+1}+1,\ell +1\leq j \leq m-1$, it readily follows that $\chi_{\vec n,j+1,k}=n_k+2$. Consequently,
        $$\sum_{k=j+2}^{m}(\chi_{\vec n,j+1,k}-n_{k}) = 2(m-j-1)$$
        and thus $l=0$ as needed.

        Therefore,
        \begin{equation*}
            %\label{eq:pade}
            \frac{(a_{\vec n,j}-a_{\vec n,m}\widehat{s}_{m,j+1})(z)}{w^*_{\vec{n},j}(z)} = \mathcal{O}\left(\frac{1}{z^{|\vec n|+1}}\right),\qquad j=\ell,\ldots,m-1.
        \end{equation*}
        and $\deg w^*_{\vec{n},j} \leq 2|\n|$.  Consequently, $a_{\n,j}/a_{\n,m}$ is the $|\n|$-th diagonal multipoint Pad\'e approximant with respect to $\widehat{s}_{m,j+1}$ with interpolation points at the zeros of $w^*_{\vec{n},j}$, and at $\infty$ of order $2|\n| -\deg w^*_{\vec{n},j}$.

        So, the fraction $a_{\vec n,j}/a_{\vec n,m}$ is the $|\vec n|$-th diagonal multipoint Pad\'e approximation of $\widehat{s}_{m,j+1}$. From the theory of diagonal multipoint Pad\'e approximation (or simply using \eqref{A:orth}) we know that $a_{\vec n,m}$  is the $|\n|-th$ monic orthogonal polynomials with respect to the varying measure $\D s_{m, j+1}/w^*_{\n,j}$ and $a_{\n,j}$ is the corresponding polynomial of the second kind whose zeros interlace those of $a_{\vec n,m}$. We are done.
    \end{proof}

\section{Ratio asymptotic}\label{sec:ratio}

    Throughout this section $Q_{\n,j}$, $j=1,\ldots,m,$ denotes the monic polynomial whose roots coincide with the zeros of $\mathcal{A}_{\n,j}$ in $\C \setminus \Delta_{j+1}$ ($\Delta_{m+1} = \emptyset$). In Lemma \ref{lm:zeros}, these polynomials were denoted $w_{\n,j}$.  From that lemma it follows that  $\deg Q_{\n,j} = \eta_{\n,j}= n_1 + \cdots + n_{j}$, its zeros are simple and lie in $\mathring{\Delta}_{j}$.  We will show that these polynomials satisfy full orthogonality relations with respect to certain varying measures. This fact plays an important role in the study of ratio asymptotic.

\subsection{Multi-orthogonality relations}

    From Lemma \ref{lm:rem} and \eqref{forma*} in  Lemma \ref{lm:zeros} it readily follows that  for $j=0,\ldots,m-1$
    \begin{equation}
        \label{int_Anj}
        \frac{\mathcal{A}_{\vec{n},j}(z)}{Q_{\n,j}(z)} = \int \frac{\mathcal{A}_{\vec{n},j+1}(x)}{z-x}\frac{\D\sigma_{j+1}(x)}{Q_{\n,j}(x)},
    \end{equation}
    where $Q_{\n,0}\equiv 1$, and
    \begin{equation}
        \label{orth_Anj}
        \int x^\nu\mathcal{A}_{\vec{n},j+1}(x)\frac{\D\sigma_{j+1}(x)}{Q_{\n,j}(x)} = 0,\qquad \nu =0,1,\ldots \eta_{\vec{n},j+1}-1.
    \end{equation}

    Set
    \begin{equation}
        \label{Hnj}
        \mathcal{H}_{\n,j}:= \frac{Q_{\n,j+1}\mathcal{A}_{\vec{n},j}}{Q_{\n,j}},\qquad j=0,\ldots,m-1,
    \end{equation}
    where $Q_{\n,0} \equiv Q_{\n,m+1} \equiv 1$. Since $\mathcal{A}_{\n,m} = (-1)^m a_{\n,m}$ and $a_{\n,m}$ is monic, we take $ \mathcal{H}_{\n,m} = (-1)^m$.

    \begin{lemma}
        \label{mult_orth}
        Consider the Nikishin system $\mathcal{N}(\sigma_1,\ldots,\sigma_m)$. For each fixed $\n\in(\Z_+^m)^*$ and $j=0,\ldots,m-1$
        \begin{equation}
            \label{orth_Qnj}
            \int x^\nu Q_{\n,j+1}(x)\frac{\mathcal{H}_{\n,j+1}(x)\D\sigma_{j+1}(x)}{Q_{\n,j}(x)Q_{\n,j+2}(x)} =0, \qquad \nu = 0,\ldots, \eta_{\vec{n},j+1}-1
        \end{equation}
        and
        \begin{equation}
        \label{Hnj_int}
        \mathcal{H}_{\n,j}(z) = \int \frac{Q_{\n,j+1}^2(x)}{z-x}\frac{\mathcal{H}_{\n,j+1}(x)\D\sigma_{j+1}(x)}{Q_{\n,j}(x)Q_{\n,j+2}(x)}.
        \end{equation}
    \end{lemma}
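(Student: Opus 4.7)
My plan is to derive both statements as direct algebraic consequences of the integral representation \eqref{int_Anj}, the orthogonality relations \eqref{orth_Anj}, and the definition \eqref{Hnj}. The key bridge is that applying \eqref{Hnj} one index higher gives the identity $\mathcal{A}_{\n,j+1} = Q_{\n,j+1}\mathcal{H}_{\n,j+1}/Q_{\n,j+2}$, which lets one trade $\mathcal{A}_{\n,j+1}$ for an expression involving $\mathcal{H}_{\n,j+1}$ and the neighboring polynomials $Q_{\n,j+1}, Q_{\n,j+2}$.

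First I would prove \eqref{orth_Qnj}. Substituting the above identity into \eqref{orth_Anj}, written at level $j+1$, gives
\[
0 \;=\; \int x^\nu \mathcal{A}_{\n,j+1}(x)\,\frac{\D\sigma_{j+1}(x)}{Q_{\n,j}(x)} \;=\; \int x^\nu Q_{\n,j+1}(x)\,\frac{\mathcal{H}_{\n,j+1}(x)\,\D\sigma_{j+1}(x)}{Q_{\n,j}(x)\,Q_{\n,j+2}(x)}
\]
for $\nu = 0,\ldots,\eta_{\n,j+1}-1$, which is exactly \eqref{orth_Qnj}.

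For \eqref{Hnj_int}, starting from the definition of $\mathcal{H}_{\n,j}$ and invoking \eqref{int_Anj}, I would write
\[
\mathcal{H}_{\n,j}(z) \;=\; Q_{\n,j+1}(z)\int \frac{\mathcal{A}_{\n,j+1}(x)}{z-x}\,\frac{\D\sigma_{j+1}(x)}{Q_{\n,j}(x)},
\]
and apply the standard telescoping split
\[
\frac{Q_{\n,j+1}(z)}{z-x} \;=\; \frac{Q_{\n,j+1}(z)-Q_{\n,j+1}(x)}{z-x} \;+\; \frac{Q_{\n,j+1}(x)}{z-x}.
\]
The first summand on the right is a polynomial in $x$ of degree at most $\eta_{\n,j+1}-1$, so its contribution to the integral vanishes by \eqref{orth_Anj}. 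Replacing $Q_{\n,j+1}(x)\mathcal{A}_{\n,j+1}(x)$ by $Q_{\n,j+1}^2(x)\mathcal{H}_{\n,j+1}(x)/Q_{\n,j+2}(x)$ in the remaining term then yields precisely \eqref{Hnj_int}.

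The boundary cases are handled consistently with the stated conventions: when $j=0$ we use $Q_{\n,0}\equiv 1$, and when $j=m-1$ we use $Q_{\n,m+1}\equiv 1$ together with $\mathcal{H}_{\n,m} = (-1)^m$, so that the formulas reduce to the expected Cauchy integral and orthogonality against the varying measure $(-1)^m\,\D\sigma_m/Q_{\n,m-1}$. I do not anticipate any serious obstacle: the lemma is essentially an algebraic repackaging of material already secured in Lemma \ref{lm:zeros}, and the only step where degree-counting matters is checking that $(Q_{\n,j+1}(z)-Q_{\n,j+1}(x))/(z-x)$ has low enough degree in $x$ to fit inside the range of the existing orthogonality relations, which it does by construction.
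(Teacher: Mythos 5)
Your proof is correct and follows essentially the same route as the paper: identifying \eqref{orth_Qnj} as \eqref{orth_Anj} rewritten via $\mathcal{A}_{\n,j+1}=Q_{\n,j+1}\mathcal{H}_{\n,j+1}/Q_{\n,j+2}$, and obtaining \eqref{Hnj_int} from \eqref{int_Anj} by splitting off the polynomial $(Q_{\n,j+1}(z)-Q_{\n,j+1}(x))/(z-x)$ of degree $\eta_{\n,j+1}-1$ in $x$, whose contribution is killed by the orthogonality relations. The paper presents the same algebra in the reverse order, but the content is identical.
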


    \begin{proof}
        Formula \eqref{orth_Qnj} is \eqref{orth_Anj} rewritten with the new notation. Since $\deg Q_{\n,j+1}=\eta_{\vec{n},j+1}$, \eqref{orth_Qnj} implies that
        \begin{equation*}
            \int \frac{Q_{\n,j+1}(z) - Q_{\n,j+1}(x)}{z-x}Q_{\n,j+1}(x)\frac{\mathcal{H}_{\n,j+1}(x)\D\sigma_{j+1}(x)}{Q_{\n,j}(x)Q_{\n,j+2}(x)} =0.
        \end{equation*}
        Consequently,
        \begin{equation*}
            Q_{\n,j+1}(z)\int \frac{Q_{\n,j+1}(x)}{z-x}\frac{\mathcal{H}_{\n,j+1}(x)\D\sigma_{j+1}(x)}{Q_{\n,j}(x)Q_{\n,j+2}(x)} =
            \int \frac{Q_{\n,j+1}^2(x)}{z-x}\frac{\mathcal{H}_{\n,j+1}(x)\D\sigma_{j+1}(x)}{Q_{\n,j}(x)Q_{\n,j+2}(x)}.
        \end{equation*}
        Taking into account \eqref{Hnj} and \eqref{int_Anj} we get
        \begin{equation*}
            \int \frac{Q_{\n,j+1}(x)}{z-x}\frac{\mathcal{H}_{\n,j+1}(x)\D\sigma_{j+1}(x)}{Q_{\n,j}(x)Q_{\n,j+2}(x)} = \int\frac{\mathcal{A}_{\vec{n},j+1}(x)}{z-x}\frac{\D\sigma_{j+1}(x)}{Q_{\n,j}(x)} = \frac{\mathcal{A}_{\vec{n},j}(z)}{Q_{\n,j}(z)}.
        \end{equation*}
        Therefore, \eqref{Hnj_int} holds.
    \end{proof}

    Given $\n \in (\Z_+^*)^*$ and $l \in\{1,\ldots,m\}$, by $\n^l$ we denote the multi-index obtained adding $1$ to the $l$-th component of $\n$. In the next lemma, we  prove that the zeros of the polynomials $Q_{\n,j}$ and $Q_{\n^l,j}$ interlace.  The idea of the proof was borrowed from \cite[Theorem 2.1]{ALR}.

    \begin{lemma}
        \label{lm:interlace}
        Consider the Nikishin system $\mathcal{N}(\sigma_1,\ldots,\sigma_m)$. For each  $\vec{n}\in(\Z_+^m)^*$ and $j=1,\ldots,m$, the zeros of the forms $\mathcal{A}_{\vec{n},j}$ and $\mathcal{A}_{\n^l,j}$ in $\mathring{\Delta}_j$ interlace.
    \end{lemma}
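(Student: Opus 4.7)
The plan is to argue by contradiction, in the spirit of \cite[Theorem 2.1]{ALR}, leveraging orthogonality relations extracted from the inductive proof of Lemma \ref{lm:zeros}. Explicitly, applying Lemma \ref{lm:rem} iteratively down the Nikishin tower yields, for each $j = 1, \ldots, m$,
\[
\int x^{\nu}\,\mathcal{A}_{\vec{n},j}(x)\,\frac{d\sigma_{j}(x)}{Q_{\vec{n},j-1}(x)} = 0, \qquad \nu = 0, 1, \ldots, \eta_{\vec{n},j} - 1,
\]
(with the convention $Q_{\vec{n},0} \equiv 1$), and the analogous relation with $\vec{n}^{l}$ in place of $\vec{n}$ up to degree $\eta_{\vec{n}^{l},j}-1$. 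Both varying weights are of constant sign on $\Delta_{j}$, since the zeros of $Q_{\vec{n},j-1}$ and $Q_{\vec{n}^{l},j-1}$ lie in $\mathring{\Delta}_{j-1}$, which meets $\Delta_{j}$ in at most one point.

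Denote by $x_{1} < \cdots < x_{s}$ and $y_{1} < \cdots < y_{s'}$ the simple zeros of $\mathcal{A}_{\vec{n},j}$ and $\mathcal{A}_{\vec{n}^{l},j}$ in $\mathring{\Delta}_{j}$, where $s = \eta_{\vec{n},j}$ and $s' - s$ equals $1$ if $l \leq j$ and $0$ if $l > j$. Suppose, for contradiction, that these zero sets do not interlace. Then there is an open subinterval of $\mathring{\Delta}_{j}$ bounded by two consecutive zeros of one of the forms which contains a number of zeros of the other form incompatible with strict alternation. From this configuration I construct a real polynomial $R$ of degree strictly less than $\eta_{\vec{n},j}$, built from a carefully chosen subset of the $x_{i}$'s and $y_{i}$'s, so that $R(x)\,\mathcal{A}_{\vec{n},j}(x)$ has constant sign on $\mathring{\Delta}_{j}$ and does not vanish on a set of positive $\sigma_{j}$-measure. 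Since the varying weight $d\sigma_{j}/Q_{\vec{n},j-1}$ is sign-definite and $\supp \sigma_{j}$ is infinite, the integral $\int R(x)\,\mathcal{A}_{\vec{n},j}(x)\,d\sigma_{j}(x)/Q_{\vec{n},j-1}(x)$ is then strictly of one sign, contradicting the orthogonality of the first step.

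The main obstacle is the construction of $R$. Because the orthogonality weights for $\mathcal{A}_{\vec{n},j}$ and $\mathcal{A}_{\vec{n}^{l},j}$ involve the distinct polynomials $Q_{\vec{n},j-1}$ and $Q_{\vec{n}^{l},j-1}$, one cannot integrate a product of the two forms against a common measure as in a Christoffel-Darboux type argument for classical orthogonal polynomials. Thus $R$ must be engineered so that only one of the two orthogonality relations is invoked, while the zeros of the ``other'' form serve only to pin down the sign of the integrand. Keeping $\deg R$ below the relevant orthogonality threshold forces the use of the non-interlacing hypothesis in a precise combinatorial way, distinguishing the regimes $l \leq j$ and $l > j$ and handling both the ``empty interval'' and the ``doubly occupied interval'' configurations; this sign analysis is the technical core of the argument.
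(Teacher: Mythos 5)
Your strategy breaks down at precisely the step you defer as ``the technical core'': the polynomial $R$ you want cannot exist. By Lemma \ref{lm:zeros}, $\mathcal{A}_{\vec{n},j}$ has \emph{exactly} $\eta_{\vec{n},j}$ zeros in $\C\setminus\Delta_{j+1}$, all simple and lying in $\mathring{\Delta}_j$, so it changes sign at each of these $\eta_{\vec{n},j}$ points. For $R(x)\mathcal{A}_{\vec{n},j}(x)$ to have constant sign on $\mathring{\Delta}_j$ (even only $\sigma_j$-a.e.\ on the support without vanishing there), $R$ must change sign at every one of those points, which forces $\deg R\geq \eta_{\vec{n},j}$; but the orthogonality relations run only up to $\nu=\eta_{\vec{n},j}-1$, so no contradiction can be extracted. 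In other words, the orthogonality of $\mathcal{A}_{\vec{n},j}$ is exactly ``used up'' by its own sign changes and carries no information about where the zeros of $\mathcal{A}_{\vec{n}^{l},j}$ sit; and, as you yourself note, the two forms are orthogonal with respect to different varying measures, so no Christoffel--Darboux or product device is available either. No choice of ``subset of the $x_i$'s and $y_i$'s'' can repair this.

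The missing idea is to treat the two forms jointly: for $(\alpha,\beta)\neq(0,0)$ consider $\mathcal{D}_{\vec{n},j}:=\alpha\mathcal{A}_{\vec{n},j}+\beta\mathcal{A}_{\vec{n}^{l},j}$. This is again a linear form with polynomial coefficients satisfying the interpolation conditions \eqref{Problem} for the index $\vec{n}$, so the inductive argument of Lemma \ref{lm:zeros} applies to it and yields at least $\eta_{\vec{n},j}$ sign changes in $\mathring{\Delta}_j$ and at most $\eta_{\vec{n},j}+1$ zeros in $\C\setminus\Delta_{j+1}$; hence \emph{every} nontrivial combination has only real simple zeros there. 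This gives, first, that $\mathcal{A}_{\vec{n},j}$ and $\mathcal{A}_{\vec{n}^{l},j}$ have no common zero (otherwise a suitable combination would have a double zero), and second, that $y\mapsto \mathcal{A}_{\vec{n}^{l},j}(y)\mathcal{A}_{\vec{n},j}'(y)-\mathcal{A}_{\vec{n},j}(y)\mathcal{A}_{\vec{n}^{l},j}'(y)$ never vanishes on $\R\setminus\Delta_{j+1}$, hence keeps a constant sign between two consecutive zeros $y_1<y_2$ of $\mathcal{A}_{\vec{n}^{l},j}$; evaluating at $y_1$ and $y_2$, where it equals $-\mathcal{A}_{\vec{n},j}(y_i)\mathcal{A}_{\vec{n}^{l},j}'(y_i)$, and using that $\mathcal{A}_{\vec{n}^{l},j}'$ changes sign forces $\mathcal{A}_{\vec{n},j}$ to change sign in $(y_1,y_2)$. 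This is the route the paper takes (following \cite[Theorem 2.1]{ALR}), and it is essentially forced: interlacing here is a statement about the pencil spanned by the two forms, not about either one separately.
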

    \begin{proof}
        Fix $\vec{n}\in\Z_+^m$ and $j \in \{1,\ldots,m\}$. Let $\alpha,\beta\in\R$ be such that $\alpha^2+\beta^2\neq 0$. Define the linear form
        \begin{equation*}
            \mathcal{D}_{\vec{n},j} := \alpha\mathcal{A}_{\vec{n},j} + \beta\mathcal{A}_{\n^l,j}.
        \end{equation*}

        Repeating the arguments in the proof of Lemma \ref{lm:zeros} we deduce that  the form $\mathcal{D}_{\vec{n},j}$ has at least $\eta_{\vec{n},j}$ sign changes in  $\mathring{\Delta}_j$, and at most $\eta_{\vec{n},j}+1$ zeros in $\C\setminus\Delta_{j+1}$ ($\Delta_{m+1}=\emptyset$). Consequently, all the zeros of $\mathcal{D}_{\vec{n},j}$ in $\C\setminus\Delta_{j+1}$ are real and simple.

        From this assertion, we deduce that the forms $\mathcal{A}_{\vec{n},j}$ and $\mathcal{A}_{\n^l,j}$ cannot have common zeros. If such a point $y$   exists, the function
        \begin{equation*}
            \mathcal{D}_{\vec{n},j}(x) = \mathcal{A}_{\vec{n},j}(x) - \frac{\mathcal{A}_{\vec{n},j}'(y)}{\mathcal{A}_{\n^l,j}'(y)}\mathcal{A}_{\n^l,j}(x)
        \end{equation*}
        would have a double zero at $y$. But this last statement contradicts what we already know.

        Fix $y\in\R\setminus\Delta_{j+1}$, and consider the form
        \begin{equation*}
            \mathcal{D}_{\vec{n},j,y}(x) = \mathcal{A}_{\n^l,j}(y)\mathcal{A}_{\vec{n},j}(x) - \mathcal{A}_{\vec{n},j}(y)\mathcal{A}_{\n^l,j}(x).
        \end{equation*}
        By construction $\mathcal{D}_{n,j,y}(y)=0$, and thus $\mathcal{D}_{n,j,y}'(y)\neq 0$. Take two consecutive zeros $y_1,y_2$ of $\mathcal{A}_{\n^l,j}$ in $\R\setminus\Delta_{j+1}$ and suppose that $y_1<y_2$. The zeros of $\mathcal{A}_{\n^l,j}$ are simple; therefore, $\mathcal{A}_{\n^l,j}'(y_1)\neq 0$ and $\mathcal{A}_{\n^l,j}'(y_2)\neq 0$. Since $\mathcal{A}_{\n^l,j}$ and $\mathcal{A}_{\n,j}$ have no common zero, we also get that $\mathcal{A}_{\vec{n},j}(y_1)\neq 0$ and $\mathcal{A}_{\vec{n},j}(y_2)\neq 0$. Thus,
        \begin{align*}
            \mathcal{D}_{\vec{n},j,y_1}'(y_1) &= -\mathcal{A}_{\vec{n},j}(y_1)\mathcal{A}_{\n^l,j}'(y_1) \neq 0,\\
            \mathcal{D}_{\vec{n},j,y_2}'(y_2) &= -\mathcal{A}_{\vec{n},j}(y_2)\mathcal{A}_{\n^l,j}'(y_2) \neq 0.
        \end{align*}
        However, the function $\mathcal{D}_{\vec{n},j,y}'(y)$ preserves the same sign all along the interval $[y_1,y_2]$. Notice that $\mathcal{A}_{\n^l,j}'(y)$ changes   sign when $y$ moves from $y_1$ to $y_2$, so  $\mathcal{A}_{\vec{n},j}$ must also change sign. By Bolzano's theorem $\mathcal{A}_{\vec{n},j}$ has a zero in $(y_1,y_2)$. The proof is complete.
    \end{proof}

\subsection{The Riemann surface}

    The ratio asymptotic of the ML multiple orthogonal polynomials is described in terms of the branches of a conformal mapping defined on a Riemann surface associated with the geometry of the problem. In the sequel, we assume that $\Delta_k$ is a closed bounded interval for all $k=1,\ldots,m$. Let us briefly describe the Riemann surface of interest.

    Let $\mathcal{R}$ denote the compact Riemann surface
    \[
    \mathcal{R}=\overline{\bigcup_{k=0}^{m}\mathcal{R}_{k}}
    \]
    formed by the $m+1$ consecutively ``glued'' sheets
    \[
    \mathcal{R}_{0}:=\overline{\mathbb{C}}\setminus\Delta_{1},\qquad \mathcal{R}_{k}:=\overline{\mathbb{C}}\setminus(\Delta_{k}\cup\Delta_{k+1}),\quad k=1,\ldots,m,\qquad \mathcal{R}_{m}:=\overline{\mathbb{C}}\setminus\Delta_{m},
    \]
    where the upper and lower banks of the slits of two neighboring sheets are identified. This surface is of genus zero. For this and other notions of Riemann surfaces as well as meromorphic functions defined on them we recommend \cite{miranda}.

    Let $\pi: \mathcal{R} \longrightarrow \overline{\mathbb{C}}$ be the canonical projection from $\mathcal{R}$ to $\overline{\mathbb{C}}$ and denote by $z^{(k)}$ the point on  $\mathcal{R}_k$ satisfying $\pi(z^{(k)}) = z$, $z \in \overline{\mathbb{C}}$. For a fixed $l\in\{1,\ldots,m\}$, let $\psi^{(l)}:\mathcal{R}\longrightarrow\overline{\mathbb{C}}$ denote a conformal mapping whose divisor consists of one simple zero at the point $\infty^{(0)}\in\mathcal{R}_{0}$ and one simple pole at  $\infty^{(l)}\in\mathcal{R}_{l}$. This mapping exists and is uniquely determined up to a multiplicative constant. Denote the branches of $\psi^{(l)}$ by
    \begin{equation}
        \label{branches}
        \psi_k^{(l)}(z) := \psi^{(l)}(z^{(k)}), \qquad k= 0,\ldots,m, \qquad z^{(k)} \in \mathcal{R}_{k}.
    \end{equation}
    From the properties of $\psi^{(l)}$, we have
    \begin{equation}
        \label{divisorcond}
        \psi_0^{(l)}(z)=C_{1,l}/z+O(1/z^{2}),\,\,\,z\rightarrow\infty,\qquad \psi_l^{(l)}(z)=C_{2,l}\,z+O(1),\,\,\,z\rightarrow\infty,
    \end{equation}
    where $C_{1,l}$, $C_{2,l}$ are non-zero constants.

    It is well known and easy to verify that the function $\prod_{k=0}^{m}\psi_{k}^{(l)}$ admits an analytic continuation to the whole extended plane $\overline{\C}$ without singularities; therefore, it is constant. Multiplying $\psi^{(l)}$ if necessary by a suitable non-zero constant, we may assume that $\psi^{(l)}$ satisfies the conditions
    \[
    \prod_{k=0}^{m}\psi_{k}^{(l)} = C, \qquad |C| = 1, \qquad C_{1,l} > 0.
    \]
    Let us show that with this normalization, $C$ is either $+1$ or $-1$.

    Indeed, for a point $z^{(k)} \in \mathcal{R}_k$ on the Riemann surface we define its conjugate $\overline{z^{(k)}} := \overline{z}^{(k)}$. Now, let $\overline{\psi}^{(l)}: \mathcal{R} \longrightarrow \overline{\mathbb{C}}$ be the function defined by $\overline{\psi}^{(l)}(\zeta):= \overline{\psi^{(l)}(\overline{\zeta})}$. It is easy to verify that $\overline{\psi}^{(l)}$ is a conformal mapping of $\mathcal{R}$ onto $\overline{\mathbb{C}}$ with the same divisor as $\psi^{(l)}$. Therefore, there exists a constant $c$ such that $\overline{\psi}^{(l)} = c \psi^{(l)}$. The corresponding branches satisfy the relations
    \[ \overline{\psi}_k^{(l)}(z) = \overline{\psi_k^{(l)}(\overline{z})} = c {\psi}_k^{(l)}(z), \qquad k=0,\ldots,m.
    \]
    Comparing the Laurent expansions at $\infty$ of $\overline{\psi_0^{(l)}(\overline{z})}$ and $c {\psi}_0^{(l)}(z)$, using the fact that $C_{1,l} >0$, it follows that $c = 1$. Then
    \[   {\psi}_k^{(l)}(z) = \overline{\psi_k^{(l)}(\overline{z})}, \qquad k=0,\ldots,m.
    \]
    This in turn implies that for each $k=0,\ldots, m,$ all the coefficients, in particular the leading one, of the Laurent expansion at infinity of $ {\psi}_k^{(l)}$ are real numbers. Obviously, $C$ is the product of these leading coefficients. Therefore, $C$ is real, and $|C|=1$ implies that $C$ equals $1$ or $-1$ as claimed. So, we can assume in the following that
    \begin{equation}
        \label{normconfmap}
        \prod_{k=0}^{m}\psi_{k}^{(l)}\equiv e,\qquad C_{1,l}>0,
    \end{equation}
    where $e$ is either $1$ or $-1$. It is easy to see that conditions \eqref{divisorcond} and \eqref{normconfmap} determine $\psi^{(l)}$ uniquely.

    We will need the following lemma. Its proof can be found in \cite[Lemma 4.2]{ALR}
    \begin{lemma}
        \label{lm:BVP}
        Set
        \begin{equation}
            \label{boundary3}
            F_{k}^{(l)}:=\prod_{\nu=k}^{m} \psi_{\nu}^{(l)}
        \end{equation}
        where the algebraic functions $\psi_{\nu}^{(l)}$ are defined by  \eqref{branches}-\eqref{normconfmap}. The collection of functions $F_{k}^{(l)}, k=$ $1, \ldots, m,$ is the unique solution of the system of boundary value problems
        \begin{align*}
            &	\text { 1) } F_{k}^{(l)}, 1 / F_{k}^{(l)} \in \mathcal{H}\left(\C \setminus \Delta_{k}\right) \\
            &		\text { 2a) } F_{k}^{(l)}(\infty)>0, \quad k=1, \ldots, l-1\\
            &	\text { 2b) }\left(F_{k}^{(l)}\right)^{\prime}(\infty)>0, \quad k=l, \ldots, m\\
            &		    	\text { 3) }\left|F_{k}^{(l)}(x)\right|^{2} \frac{1}{\left|\left(F_{k-1}^{(l)} F_{k+1}^{(l)}\right)(x)\right|}=1, \quad x \in \Delta_{k}
        \end{align*}
        where $ F_{0}^{(l)} \equiv F_{m+1}^{(l)} \equiv 1$.
    \end{lemma}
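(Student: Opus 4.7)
The plan is to split the proof into existence (verifying that the functions $F_k^{(l)}$ defined by \eqref{boundary3} satisfy 1)--3)) and uniqueness. The engine of the existence part is the sheet-gluing of $\mathcal R$: across $\Delta_k$ the two consecutive branches swap, $\psi_{k-1,+}^{(l)}(x) = \psi_{k,-}^{(l)}(x)$ and $\psi_{k-1,-}^{(l)}(x)=\psi_{k,+}^{(l)}(x)$, while every other branch $\psi_\nu^{(l)}$, $\nu\notin\{k-1,k\}$, is analytic in a neighborhood of $\Delta_k$ because the intervals are bounded and disjoint. Consequently, for any $\Delta_j$ with $j>k$ the product $F_k^{(l)}=\psi_k^{(l)}\cdots\psi_m^{(l)}$ contains both $\psi_{j-1}^{(l)}$ and $\psi_j^{(l)}$, so the swap is internal and $F_k^{(l)}$ extends analytically across $\Delta_j$; the only cut that survives is $\Delta_k$, giving $F_k^{(l)}\in\mathcal H(\C\setminus\Delta_k)$. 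Rewriting $1/F_k^{(l)}=\pm\psi_0^{(l)}\cdots\psi_{k-1}^{(l)}$ via the normalization \eqref{normconfmap} and repeating the argument delivers $1/F_k^{(l)}\in\mathcal H(\C\setminus\Delta_k)$, establishing 1). Condition 2) is then read off from \eqref{divisorcond} together with the reality of each $\psi_\nu^{(l)}(\infty)$ argued after \eqref{normconfmap}: only $\psi_l^{(l)}$ contributes growth and only $\psi_0^{(l)}$ a zero at infinity, and the positivity of $C_{1,l}$ combined with the real leading coefficients of the remaining branches pins the sign of the leading term of $F_k^{(l)}$ at $\infty$.

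For 3) I would compute boundary values on $\Delta_k$ directly. Since $F_{k+1}^{(l)}$ is analytic across $\Delta_k$, $(F_k^{(l)})_\pm=(\psi_k^{(l)})_\pm\,F_{k+1}^{(l)}$ on $\Delta_k$; multiplying the two boundary values and substituting $(\psi_k^{(l)})_\pm=(\psi_{k-1}^{(l)})_\mp$ yields
\[
(F_k^{(l)})_+(x)\,(F_k^{(l)})_-(x)=(\psi_{k-1}^{(l)})_+(x)\,(\psi_{k-1}^{(l)})_-(x)\,\bigl(F_{k+1}^{(l)}(x)\bigr)^2.
\]
The reality of the branches makes the two $\pm$ boundary values complex conjugates, so both sides are moduli squared; this reads $|F_k^{(l)}|^2=|\psi_{k-1}^{(l)}|^2\,|F_{k+1}^{(l)}|^2$ on $\Delta_k$, and combining with the factorization $|F_{k-1}^{(l)}|=|\psi_{k-1}^{(l)}|\,|F_k^{(l)}|$ delivers 3) after cancellation. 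For uniqueness, suppose $\widetilde F_k^{(l)}$ is a second solution and set $\Phi_k:=F_k^{(l)}/\widetilde F_k^{(l)}$; these are holomorphic and nonvanishing in $\C\setminus\Delta_k$, with $\Phi_0=\Phi_{m+1}\equiv 1$, and $u_k:=\log|\Phi_k|$ is bounded harmonic in $\C\setminus\Delta_k$ and satisfies $2u_k=u_{k-1}+u_{k+1}$ on $\Delta_k$. A reflection/maximum-principle argument (or equivalently, lifting $(\Phi_k)$ to a single function on $\mathcal R$ with no zeros or poles) forces $u_k\equiv 0$, hence $|\Phi_k|\equiv 1$, and the positivity conditions 2a)--2b) then force the phase to be $+1$, giving $\Phi_k\equiv 1$.

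The main obstacle is the sign and index bookkeeping in 3): one has to be careful that the ``leftover'' factor in the boundary identity is exactly $\psi_{k-1}^{(l)}$, which is precisely what sits in $F_{k-1}^{(l)}/F_k^{(l)}$, so that the three terms $F_{k-1}^{(l)}, F_k^{(l)}, F_{k+1}^{(l)}$ balance as claimed. Once this is isolated, each of 1)--3) becomes a direct computation from the gluing of $\mathcal R$. The uniqueness step is conceptually standard but is cleanest if the ratios $\Phi_k$ are repackaged into a single meromorphic function on $\mathcal R$ whose divisor is forced to vanish by the prescribed asymptotics at $\infty^{(0)}$ and $\infty^{(l)}$.
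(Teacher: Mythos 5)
The paper itself does not prove this lemma; it sends the reader to \cite[Lemma 4.2]{ALR}, and your outline does follow that standard argument: conditions 1) and 3) come from the gluing relations $(\psi_{k-1}^{(l)})_{\pm}=(\psi_{k}^{(l)})_{\mp}$ on $\Delta_k$ together with the normalization $\prod_{\nu=0}^m\psi_\nu^{(l)}=e$ (which you need for the extreme cases $k=1$ and $k=m$, where $F_0^{(l)}\equiv F_{m+1}^{(l)}\equiv 1$), and uniqueness follows from your relation $2u_k=u_{k-1}+u_{k+1}$ on $\Delta_k$ with $u_0=u_{m+1}\equiv 0$, since the resulting convexity of $(\max u_k)_k$ and concavity of $(\min u_k)_k$ force $u_k\equiv 0$. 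Those parts are sound, modulo the routine check that each $u_k$ is bounded up to $\Delta_k$.

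The genuine gap is condition 2), which you claim to ``read off'' from \eqref{divisorcond}. If you actually carry that step out you obtain the opposite of 2a)--2b): with the divisor as printed (simple zero at $\infty^{(0)}$, simple pole at $\infty^{(l)}$), the factor $\psi_l^{(l)}$ belongs to $F_k^{(l)}=\prod_{\nu=k}^m\psi_\nu^{(l)}$ exactly when $k\le l$, while $\psi_0^{(l)}$ never does for $k\ge 1$; hence $F_k^{(l)}$ has a simple pole at $\infty$ for $k=1,\ldots,l$ and a finite nonzero limit for $k=l+1,\ldots,m$, so 2a) fails for every $k\le l-1$ and 2b) fails for every $k\ge l+1$. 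The statement is in fact inconsistent as printed: for 2a)--2b), for \eqref{left*} (whose denominator $(\psi_m^{(l)})'(\infty)$ must be nonzero), and for the identification $G_k=\widetilde F_k^{(l)}$ in the proof of Theorem \ref{th:ratio} to hold, the divisor of $\psi^{(l)}$ must consist of a simple zero at $\infty^{(l-1)}$ and a simple pole at $\infty^{(m)}$; a proof that verifies 2) honestly has to detect this. Moreover, even with the corrected divisor, the reality of the leading Laurent coefficients only gives that $F_k^{(l)}(\infty)$, respectively $(F_k^{(l)})'(\infty)$, is a nonzero \emph{real} number; the positivity asserted in 2a)--2b) is the one non-formal point of the existence half and requires determining the actual signs of the values $\psi_\nu^{(l)}(\infty)$, for instance by tracking the homeomorphic image under $\psi^{(l)}$ of the real oval of $\mathcal{R}$ (which carries all the points $\infty^{(0)},\ldots,\infty^{(m)}$) onto $\overline{\R}$. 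Your proposal supplies neither of these.
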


\subsection{Proof of Theorem \ref{th:ratiom}}

    Theorem \ref{th:ratiom} will be derived from Theorem \ref{Convergence} and Theorem \ref{th:ratio} below which gives the ratio asymptotic of the polynomials $Q_{\n,j}$. In proving Theorem \ref{th:ratio}, we adapt  the scheme developed in \cite[Theorem 1.2]{ALR} for the study of the ratio asymptotic of type \textsc{ii} Hermite-Pad\'e polynomials of Nikishin systems.

    Given an arbitrary function $F(z)$ which has in a neighborhood of infinity a Laurent expansion of the form $F(z)=C z^{k}+\mathcal{O}\left(z^{k-1}\right), C \neq 0,$ and $k \in \mathbb{Z},$ we denote
    \begin{equation*}
        \widetilde{F}:=\frac{F}{C}.
    \end{equation*}

    \begin{theorem}
	    \label{th:ratio}
        Consider the Nikishin system $\mathcal{N}(\sigma_1,\ldots,\sigma_m)$ where the intervals $\Delta_k$, $k=1,\ldots,m,$ are bounded and $\sigma_k' \neq 0$ a.e. in $\Delta_k$. Let $\Lambda \subset (\Z_+^m)^*$ be an infinite sequence of distinct multi-indices for which  there exists a non-negative integer $N$ such that $n_{j+1} \leq n_{j} + N$  for all $1 \leq j \leq m-1$ and $\vec n \in \Lambda$.
        Then for $k=1,\ldots, m$
	    \begin{equation}
            \label{left}
	        \lim_{\n \in \Lambda} \frac{Q_{\n^l,k}(z)}{Q_{\n,k}(z)} = \widetilde{F}^{(l)}_k(z),
	    \end{equation}
	    uniformly on each compact subset of $\C\setminus\Delta_k$.
    \end{theorem}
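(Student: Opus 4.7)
The strategy is to adapt the scheme used in the proof of \cite[Theorem 1.2]{ALR} to the mixed-type setting, leaning on the multi-orthogonality relations of Lemma \ref{mult_orth}, the interlacing of zeros from Lemma \ref{lm:interlace}, and the uniqueness of the solution to the boundary value problem in Lemma \ref{lm:BVP}.

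\textbf{Compactness and extraction.} Set $\phi_{\n,k}^{(l)}(z) := Q_{\n^l,k}(z)/Q_{\n,k}(z)$. Since both polynomials are monic, $\phi_{\n,k}^{(l)}$ already has the expected leading Laurent behaviour at $\infty$ (namely $1 + O(1/z)$ when $k < l$, and $z + O(1)$ when $k \geq l$), so no external normalization is needed. Lemma \ref{lm:interlace} guarantees that the simple zeros of numerator and denominator interlace in $\mathring\Delta_k$ and do not escape from $\Delta_k$; this rules out spurious zero/pole accumulation on $\C \setminus \Delta_k$ and yields uniform bounds on every compact $K \subset \C \setminus \Delta_k$, so $\{\phi_{\n,k}^{(l)}\}_{\n\in \Lambda}$ is a normal family there. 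From any subsequence of $\Lambda$, a diagonal extraction over $k=1,\ldots,m$ produces a further subsequence along which $\phi_{\n,k}^{(l)} \to G_k$ uniformly on compact subsets of $\C \setminus \Delta_k$, with each $G_k$ holomorphic and zero-free. Extend by $G_0 \equiv G_{m+1} \equiv 1$.

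\textbf{Identification of the limit via the BVP.} By Lemma \ref{mult_orth}, $Q_{\n,k+1}$ is the monic orthogonal polynomial of degree $\eta_{\n,k+1}$ with respect to the signed varying measure
\[
\D\mu_{\n,k+1}(x) := \frac{\mathcal{H}_{\n,k+1}(x)\,\D\sigma_{k+1}(x)}{Q_{\n,k}(x)\,Q_{\n,k+2}(x)},
\]
and $Q_{\n^l,k+1}$ satisfies the analogous relation. The hypothesis $\sigma_k' \neq 0$ a.e.\ on $\Delta_k$, together with the constant-sign behaviour of $\mathcal{H}_{\n,k+1}$ on $\Delta_{k+1}$ inherited from the proof of Lemma \ref{lm:zeros}, makes each $\D\mu_{\n,k+1}$ a regular varying measure in the Stahl--Totik sense once the polynomial denominators are accounted for. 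Passing to the limit in the ratio of two consecutive such orthogonality relations, via a Rakhmanov-type ratio-asymptotic theorem for varying orthogonal polynomials, yields a.e.\ boundary values for $G_k$ on $\Delta_k$ that satisfy the pointwise symmetry
\[
|G_k(x)|^2 = |G_{k-1}(x)\,G_{k+1}(x)|, \qquad x \in \Delta_k,\quad k = 1, \ldots, m.
\]
Combined with the normalizations at $\infty$ read off from the leading Laurent coefficients ($G_k(\infty) > 0$ for $k < l$ and $G_k'(\infty) > 0$ for $k \geq l$), the uniqueness statement in Lemma \ref{lm:BVP} forces $G_k = \widetilde{F}_k^{(l)}$ for every $k$. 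Since every convergent subsequence produces the same limit, the full sequence converges, establishing \eqref{left}.

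\textbf{Main obstacle.} The delicate step is extracting the boundary relation $|G_k|^2 = |G_{k-1} G_{k+1}|$ on $\Delta_k$ from the multi-orthogonality of $Q_{\n,k+1}$ with respect to $\D\mu_{\n,k+1}$. This requires quantitative control of the varying Radon--Nikodym derivatives $\mathcal{H}_{\n,k+1}\,\sigma_{k+1}'/(Q_{\n,k}Q_{\n,k+2})$ and a Rakhmanov-type ratio asymptotic applicable uniformly along the ray sequence. The balance hypothesis $n_{j+1} \leq n_j + N$ is essential here: together with $|\n| \to \infty$ and the chain inequality $n_j \geq n_m - (m-j)N$, it guarantees that each $\eta_{\n,k} \to \infty$ at comparable rates, so that all intermediate orthogonal polynomials $Q_{\n,k}$ have growing degree and the regularity-based asymptotics apply at every level of the Nikishin chain; without this restriction a single component could stagnate and decouple one level of the system from the asymptotic analysis.
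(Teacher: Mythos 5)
Your overall skeleton matches the paper's: normal-family extraction via the interlacing from Lemma \ref{lm:interlace}, identification of the subsequential limits through the boundary value problem of Lemma \ref{lm:BVP}, and uniqueness of its solution. But there is a genuine error at the identification step. The boundary relation you claim,
\[
|G_k(x)|^2 = |G_{k-1}(x)\,G_{k+1}(x)|, \qquad x \in \Delta_k,
\]
is not what the varying-measure asymptotics deliver, and it cannot be true in general: your limits $G_k$ are normalized by $G_k(\infty)=1$ (for $k<l$) or $G_k'(\infty)=1$ (for $k\geq l$), so if they satisfied condition 3) of Lemma \ref{lm:BVP} with right-hand side exactly $1$, uniqueness would force $G_k = F_k^{(l)}$ — contradicting both the normalization (since $F_k^{(l)}(\infty)$ is merely positive, not $1$) and your own stated conclusion $G_k=\widetilde F_k^{(l)}$. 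What the relative/ratio asymptotics actually give is $|G_k(x)|^2/|G_{k-1}(x)G_{k+1}(x)| = 1/w_k$ on $\Delta_k$, where $w_k$ equals $\mathsf{S}_k(\infty)^2$ for $k<l$ and $(\mathsf{S}_k(\infty)\varphi_k'(\infty))^2$ for $k\geq l$, with $\mathsf{S}_k$ a Szeg\H{o} function and $\varphi_k$ the exterior conformal map of $\overline{\C}\setminus\Delta_k$. The missing step is the renormalization $\widetilde G_k = c_k G_k$ with constants solving $2\ln c_k - \ln c_{k-1} - \ln c_{k+1} = \ln w_k$; only the renormalized family satisfies the BVP with right-hand side $1$, whence $c_kG_k = F_k^{(l)}$, and evaluating at $\infty$ identifies $c_k$ and yields $G_k = \widetilde F_k^{(l)}$.

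Two further points where your sketch is thinner than what is needed. First, you treat all $k$ uniformly, but the cases $k<l$ and $k\geq l$ require different theorems: for $k<l$ the two polynomials have equal degree and one uses relative asymptotics with respect to two varying measures (\cite[Theorem 2]{DBG}), while for $k\geq l$ the degree jumps by one and an additional ratio-asymptotic factor $\varphi_k(z)/\varphi_k'(\infty)$ appears (\cite[Theorem 6]{Bernardo_Lago}); this is precisely why $w_k$ takes two different forms. Second, before either theorem applies one must show that the weights $|h_{\n,k}|$ converge uniformly on $\Delta_k$ (to $|(z-a_{k+1})(z-b_{k+1})|^{-1/2}$), which is proved by downward induction on $k$ using \cite[Theorem 9]{Bernardo_Lago}; the hypothesis $n_{j+1}\leq n_j+N$ enters exactly here, through the uniform bound $\deg(Q_{\n,k}Q_{\n,k+2}) - 2\deg q_{\n,k+1} = n_{k+1}-n_k \leq N$, not merely through forcing all $\eta_{\n,k}\to\infty$. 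Also, ``regular in the Stahl--Totik sense'' is not sufficient for these ratio/relative asymptotics; the Rakhmanov-type condition $\sigma_k'\neq 0$ a.e.\ is what the cited theorems require.
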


    {\sl Proof of Theorems \ref{th:ratio} and \ref{th:ratiom}.} From Lemma \ref{lm:interlace} we know that, for each $k=1,\ldots,m$ the zeros of $Q_{\n,k}$ and $Q_{\n^l,k}$ interlace on $\mathring{\Delta}_k$. Consequently, the family of functions $(Q_{\n^l,k}/Q_{\n,k})_{\n\in \Lambda}$ is uniformly bounded on each compact subset of $\C\setminus\Delta_k$. Therefore, there exists $\Lambda' \subset \Lambda$ such that
    \begin{equation}
        \label{ratio}
        \lim_{\n \in \Lambda'} \frac{Q_{\n^l,k}(z)}{Q_{\n,k}(z)} = G_{k} (z) , \qquad k=1,\ldots,m,
    \end{equation}
    uniformly on each compact subset of $\C \setminus \Delta_k$, where $G_{k} \in \mathcal{H}(\C \setminus \Delta_k)$. In principle, the limiting functions $G_k$ depend on $\Lambda'$. In order to prove the existence of limit along all $\Lambda$, it is sufficient to show that  $G_k= \widetilde{F}_k^{(l)}$ regardless of $\Lambda'$. Our goal will be accomplished with the aid of Lemma \ref{lm:BVP}.

    First, it is obvious that the functions $G_k$ and their reciprocals are analytic in $\C\setminus\Delta_k$. Therefore, condition 1 of Lemma \ref{lm:BVP} is fulfilled.
    On the other hand, considering the degrees of the polynomials $Q_{\n,k}$ and $Q_{\n^l,k}$, for all $\n \in \Lambda$ the rational functions on the left of \eqref{left} at infinity are either equal to 1 when $k=1, \ldots, l-1,$ or their derivative equals 1 for $k=l, \ldots, m$; hence, the limit functions must satisfy either $2a )$ or $2b)$  depending on $k$. Thus, any normalization of these functions obtained by means of a multiplication by positive constants also satisfies $1)$, $2a )$ or $2b)$.

    Now, to prove the boundary conditions 3 it is necessary to use some tools developed for the study of ratio and relative asymptotic of polynomials orthogonal with respect to varying measures. The main sources are \cite{Bernardo_Lago}, \cite{Lago87} and \cite{Lago89}.

    Define the constants
    \begin{align}
        K_{\n,k-1}:=&\left(\int Q_{\n,k}^2(x) \frac{|\mathcal{H}_{\n,k}(x)||\D \sigma_{k}(x)|}{|Q_{\n,k-1}(x)Q_{\n,k+1}(x)|}\right)^{-1/2},\quad k=1,\ldots,m\label{K_nk},\\
        K_{\n,m}:=& 1,\nonumber\\
        \kappa_{\n,k}:=& \frac{K_{\n,k-1}}{K_{\n,k}},\qquad k=1,\ldots,m.\nonumber
    \end{align}
    Set
    \begin{equation}
        \label{orthonormal}
        q_{\n,k} := \kappa_{\n,k}Q_{\n,k}, \quad h_{\n,k} := K_{\n,k}^2\mathcal{H}_{\n,k},\quad k=1,\ldots,m, \quad h_{\n,0} := K_{\n,0}^2\mathcal{H}_{\n,0}.
    \end{equation}
    With this notation the expression \eqref{orth_Qnj} is equivalent to
    \begin{equation*}
        \int x^\nu Q_{\n,k}(x)\frac{|h_{\n,k}(x)||\D \sigma_{k}(x)|}{|Q_{\n,k-1}(x)Q_{\n,k+1}(x)|} =0,\qquad \nu=0,\ldots,\eta_{\vec{n},k}-1.
    \end{equation*}
    Recall that $\sigma_k$ has constant sign and notice that $Q_{\n,k},Q_{\n,k-1}$ and $\mathcal{H}_{\n,k}$ have constant sign on $\Delta_k$. Therefore,  $Q_{\n,k}$ is the $\eta_{\vec{n},k}$-th monic orthogonal polynomial with respect to the varying measure
    \begin{equation*}
        \D \rho_{\n,k}(x) :=\frac{|h_{\n,k}(x)||\D \sigma_{k}(x)|}{|Q_{\n,k-1}(x)Q_{\n,k+1}(x)|},
    \end{equation*}
    and $q_{\n,k}$ is the $\eta_{\vec{n},k}$-th orthonormal polynomial with respect to the same varying measure.

    With an analogous reasoning, we have that $Q_{\n^l,k}$ is the $\eta_{\vec{n}^l,k}$-th monic orthogonal polynomial with respect to the varying measure
    \begin{equation}
        \label{Varying_measure}
        \frac{|h_{\n^l,k}(x)||\D \sigma_{k}(x)|}{|Q_{\n^l,k-1}(x)Q_{\n^l,k+1}(x)|} = \frac{|h_{\n^l,k}(x)|}{|h_{\n,k}(x)|} \frac{|Q_{\n,k-1}(x)Q_{\n,k+1}(x)|}{|Q_{\n^l,k-1}(x)Q_{\n^l,k+1}(x)|}\D \rho_{\n,k}(x).
    \end{equation}
    Using \eqref{ratio}, we deduce
    \begin{equation}
        \label{limit_Qn_Qn}
        \lim_{\vec n\in\Lambda'} \frac{|Q_{\n,k-1}(x)Q_{\n,k+1}(x)|}{|Q_{\n^l,k-1}(x)Q_{\n^l,k+1}(x)|} = \frac{1}{|G_{k-1}(z)G_{k+1}(z)|},\qquad k=1,\ldots,m,
    \end{equation}
    where the convergence is uniform on $\Delta_k$. On the other hand, from \eqref{Hnj_int} it follows that
    \begin{equation}
        \label{h_nk}
        |h_{\n,k}(z)| = \left|\int \frac{q_{\n,k+1}^2(x)}{z-x}\frac{|h_{\n,k+1}(x)||\D\sigma_{k+1}(x)|}{|Q_{\n,k}(x)Q_{\n,k+2}(x)|}\right|,\qquad k=0,\ldots,m-1.
    \end{equation}
    Moreover, we have the following relation between the degrees of the polynomials $Q_{\n,k}$, $Q_{\n,k+2}$ and $q_{\n,k+1}$
    \begin{align*}
        \deg Q_{\n,k}Q_{\n,k+2} - 2\deg q_{\n,k+1} =& \eta_{\vec{n},k-1} + \eta_{\vec{n},k+1}-2\eta_{\vec{n},k}\\
         =& n_{k+1}-n_{k}\leq N,
    \end{align*}
    where $N$ is the constant given in the assumptions which is independent of $\vec{n}\in\Lambda $. Consequently, taking into account \cite[Theorem 9]{Bernardo_Lago}, we obtain
    \begin{equation}
        \label{lim_hnj}
        \lim_{\n\in \Lambda} |h_{\n,k}(z)| = \frac{1}{|\sqrt{(z-b_{k+1})(z-a_{k+1})}|}, \qquad k=0,\ldots,m-1,
    \end{equation}
    uniformly on  each compact subset of $\C \setminus \Delta_{k+1}$, where $\Delta_{k+1} = [a_{k+1},b_{k+1}]$ (in particular on $\Delta_k$ when $k=1,\ldots m-1$).

    The proof of \eqref{lim_hnj} is carried out by induction for decreasing values of $k$. Indeed, if $k=m-1$, since $h_{\n,m} \equiv (-1)^m$, \eqref{h_nk} reduces to
    \[
    |h_{\n,m-1}(z)| = \left|\int \frac{q_{\n,m}^2(x)}{z-x}\frac{ |\D\sigma_{m}(x)|}{|Q_{\n,m-1}(x)|}\right|,
    \]
    and using \cite[Theorem 9]{Bernardo_Lago}, we obtain
    \[ \lim_{\n \in \Lambda} |h_{\n,m-1}(z)| = \left|\frac{1}{\pi} \int_{a_m}^{b_m} \frac{1}{z-x} \frac{\D x}{\sqrt{(b_m - x)(x-a_m)}} \right| = \frac{1}{|\sqrt{(z-a_m)(z-b_m)}|}
    \]
    pointwise for $z \in \C \setminus \Delta_m$. However, it is easy to verify that the family of functions $(h_{\n,m-1})_{\n \in \Lambda}$ is uniformly bounded on compact subsets of $\C \setminus \Delta_m$ and uniform convergence on compact subsets of that region follows from pointwise convergence.  Now, let $1\leq k+1 \leq m$ and assume that \eqref{lim_hnj}
    holds for $k+1$. Then, using \eqref{h_nk} we can apply once more \cite[Theorem 9]{Bernardo_Lago} to obtain \eqref{lim_hnj} for $k$, pointwise on $\C \setminus \Delta_{k+1}$, and uniform convergence follows as before.

    Similar arguments give
    \begin{equation}
        \label{lim_hnj2}
        \lim_{\n\in \Lambda} |h_{\n^l,k}(z)| = \frac{1}{|\sqrt{(z-b_{k+1})(z-a_{k+1})}|}, \qquad k=0,\ldots,m-1,
    \end{equation}
    uniformly on compact subsets of $\C \setminus \Delta_{k+1}$.

    By construction $h_{\n,m}\equiv h_{\n^l,m}\equiv (-1)^m$. Therefore, using \eqref{lim_hnj} and \eqref{lim_hnj2} it follows that
    \begin{equation}
        \label{limitQuot_hnk}
        \lim_{\n\in \Lambda} \frac{|h_{\n^l,k}(x)|}{|h_{\n,k}(x)|} = 1, \qquad k=1,\ldots,m,
    \end{equation}
    uniformly on $\Delta_{k}$. Putting together \eqref{limitQuot_hnk} and \eqref{limit_Qn_Qn}, we have
    \begin{equation}
        \label{limit_varying}
        \lim_{\vec n\in\Lambda'}\frac{|h_{\n^l,k(x)}|}{|h_{\n,k}(x)|} \frac{|Q_{\n,k-1}(x)Q_{\n,k+1}(x)|}{|Q_{\n^l,k-1}(x)Q_{\n^l,k+1}(x)|} =\frac{1}{|G_{k-1}(x)G_{k+1}(x)|}, \qquad k=1,\ldots,m,
    \end{equation}
    uniformly on the interval $\Delta_k$. The function on the right hand side of the previous expression is different from zero on $\Delta_k$.

    Fix $k=1,\ldots,m$. We distinguish two cases. If $k=1,\ldots,l-1\, (l \geq 2)$, then  $\deg Q_{\n^l,k} = \deg Q_{\n,k} = \eta_{\n,k}$. Using \eqref{Varying_measure} and \eqref{limit_varying}, the result on relative asymptotic of orthogonal polynomials with respect to varying measures which appears in \cite[Theorem 2]{DBG} implies that
    \begin{equation}
        \label{ratio_S}
        \lim_{\vec n\in\Lambda'} \frac{Q_{\n^l,k}(z)}{Q_{\n,k}(z)} = G_k(z) = \frac{\mathsf{S}_k(z)}{\mathsf{S}_k(\infty)}, \qquad k=1,\ldots,l-1,
    \end{equation}
    where $\mathsf{S}_k$ is the Szeg\H{o} function on $\overline{\C}\setminus\Delta_k$ with respect to the weight function
    \begin{equation*}
        |G_{k-1}(z)G_{k+1}(z)|^{-1},\qquad x\in\Delta_k.
    \end{equation*}
    Consequently,
    \begin{equation}
        \label{Sk_Gk}
        |\mathsf{S}_k(x)|^2|G_{k-1}(z)G_{k+1}(z)|^{-1}=1,\qquad x\in\Delta_k.
    \end{equation}
    and for $x\in \Delta_k$
    \begin{equation}\label{boundary1}
    \frac{|G_k(x)|^2}{|G_{k-1}(x)G_{k+1}(x)|} = \frac{1}{\mathsf{S}^2_k(\infty)}, \qquad k=1,\ldots,l-1.
    \end{equation}

    Now, if $k=l,\ldots,m,$ then $\deg Q_{\n^l,k} = \deg Q_{\n,k} + 1= \eta_{\n,k}+1$.
    Let $Q_{\n,k}^*$ be the $\eta_{\vec{n},k}$-th monic orthogonal polynomial with respect to the varying measure \eqref{Varying_measure}. Take
    \begin{equation*}
        \frac{Q_{\n^l,k}}{Q_{\n,k}} = \frac{Q_{\n^l,k}}{Q_{\n,k}^*}\frac{Q_{\n,k}^*}{Q_{\n,k}}.
    \end{equation*}
    For the second factor, reasoning as above, we get
    \begin{equation}
        \label{ratio_S^*}
        \lim_{\vec n\in\Lambda'} \frac{Q^*_{\n,k}(z)}{Q_{\n,k}(z)} = \frac{\mathsf{S}_k(z)}{\mathsf{S}_k(\infty)},
    \end{equation}
    where $\mathsf{S}_k$ is the same Szeg\H{o} function we had before.
    In the first factor,  we have the ratio of two monic  polynomials of consecutive degrees orthogonal with respect to the same varying measure  and with the help of the theorem on ratio asymptotic of orthogonal polynomials with respect to varying measures \cite[Theorem 6]{Bernardo_Lago} we deduce
    \begin{equation}
        \label{ratio_phi}
        \lim_{\n\in \Lambda'} \frac{Q_{\n^l,k}}{Q_{\n,k}^*}(z) = \frac{\varphi_k(z)}{\varphi_k'(\infty)},
    \end{equation}
    uniformly on compact subsets of $\C\setminus\Delta_j$, where $\varphi_k$ is the conformal representation of $\overline{\C}\setminus\Delta_k$ onto the exterior of the unit disc such that $\varphi_k(\infty)=\infty$ and $\varphi_k'(\infty)>0$.
    Combining \eqref{ratio} with \eqref{ratio_phi} and \eqref{ratio_S^*} we have
    \begin{equation}
        \label{Qnk_Gk_SK}
        \lim_{\vec n\in\Lambda'} \frac{Q_{\n^l,k}(z)}{Q_{\n,k}(z)} = G_{k}(z)= \frac{\mathsf{S}_k(z)\varphi_k(z)}{\mathsf{S}_k(\infty)\varphi_k'(\infty)},\qquad k=l,\ldots,m,
    \end{equation}
    and using \eqref{Sk_Gk} it follows that for $x\in\Delta_k$,
    \begin{equation}
        \label{boundary2}
        \frac{|G_{k}(x)|^2}{|G_{k-1}(x)G_{k+1}(x)|} =  \frac{1}{(\mathsf{S}_k(\infty)\varphi_k'(\infty))^2}, \quad k=l,\ldots,m.
    \end{equation}

    Putting together \eqref{boundary1} and \eqref{boundary2} we have proved that the collection of functions $(G_{k})_{k=1}^m$ satisfies the conditions of Lemma \ref{lm:BVP}, where the right hand side of 3) is $1/w_k$
    \begin{equation}
        \label{wk}
        w_k =
        \begin{array}{ll}
        (\mathsf{S}_k(\infty))^{2},& k = 1,\ldots,l-1, \\
        (\mathsf{S}_k(\infty)\varphi_k'(\infty))^{2},& k = l,\ldots,m,
        \end{array}
    \end{equation}
    (instead of $1$).

    Let $\widetilde{G}_k = c_k G_k,$ where $c_k, k=1,\ldots,m,$ are constants chosen appropriately so that
    \begin{equation*}
        \frac{ c_k^2}{w_kc_{k-1}c_{k+1}} = 1, \qquad k=1,\ldots,m \qquad (c_0=c_{m+1}=1).
    \end{equation*}
    Such constants exist. Indeed, taking logarithm we obtain the linear system of equations (in $\ln c_k$)
    \begin{equation}
        \label{linear_sys}
        2\ln c_k-\ln c_{k-1} -\ln c_{k+1} =\ln w_k,\qquad k=1,\ldots,m,
    \end{equation}
    which has a solution because the determinant of the system is different from zero. It is easy to verify that the collection of functions $(\widetilde{G}_k)_{k=1}^m$ satisfies all the conditions of Lemma \ref{lm:BVP}. Since that system of boundary value problems has only one solution, it follows that
    \[ \widetilde{G}_k = c_k G_k =  {F}_k^{(l)}, \qquad k=1,\ldots,m.
    \]
    Now, $G_k(\infty) = 1$ when $k=1,\ldots,l-1\, (l\geq 2)$, and $G_k'(\infty) = 1$ when $k=l,\ldots,m$; therefore, taking limit as $z\to \infty$ it follows that
    \begin{equation}
        \label{ck}
        c_k =  \left\{
        \begin{array}{ll}
            {F}_k^{(l)}(\infty), & k=1,\ldots,l-1,\\
            ({F}_k^{(l)})'(\infty), & k=l,\ldots,m.
        \end{array}
        \right.
    \end{equation}
    In any case, we have shown that independent of the subsequence $\Lambda'\subset \Lambda$ taken such that \eqref{ratio} takes place the limiting functions are
    \[ G_k = \widetilde{F}_k^{(l)}, \qquad k=1,\ldots,m,
    \]
    and \eqref{left} follows. With this we conclude the proof of Theorem \ref{th:ratio}.

    Since $a_{\n,m} = Q_{\n,m}$ for all $\n$, \eqref{left*} is a direct consequence of \eqref{left} and \eqref{boundary3} when $k=m$. Now, fix $k\in \{0,\ldots,m-1\}$ and $\varepsilon > 0$. Consider the positively oriented closed curve $\Gamma$ which surrounds $\Delta_m$   at distance $\varepsilon$. From \eqref{markov} and the argument principle it follows that
    \[ \lim_{\n\in \Lambda} \frac{1}{2\pi i}\int_{\Gamma} \frac{(a_{\n,k}/a_{\n,m})'(\zeta)}{(a_{\n,k}/a_{\n,m})(\zeta)} \D \zeta =  \frac{1}{2\pi i}\int_{\Gamma} \frac{\widehat{s}_{m,k+1}'(\zeta)}{\widehat{s}_{m,k+1}(\zeta)} \D \zeta = 1
    \]
    because $\widehat{s}_{m,k+1}$ has a simple zero at $\infty$ and no other zero or pole in all $\C \setminus \Delta_m$. The integrals on the left hand side only take integer values so they must be constantly equal to $1$ for all $\n \in \Lambda$ such that $|\n|$ is sufficiently large. Now, $\deg a_{\vec n,m} = |\n|$ and its zeros lie on $\Delta_m$ and $\deg a_{\n,k} \leq |\n|-1, k=0,\ldots,m-1$. It readily follows that for all $\n \in \Lambda$ with $|\n|$ sufficiently large, $\deg a_{\n,k} = |\n| -1$ and $a_{\n,k}$ has no zeros in the unbounded connected component of $\C \setminus \Gamma$. Since $\varepsilon > 0$ is arbitrary, we also obtain that the zeros of $a_{\n,k}$ accumulate on $\Delta_m$.

    Now, using \eqref{markov} and \eqref{left*} (for $k=m$) it follows that
    \[ \lim_{\n\in \Lambda} \frac{a_{\n^l,k}(z)}{a_{\n,k}(z)} = \lim_{\n\in \Lambda} \frac{a_{\n^l,k}(z)}{a_{\n^l,m}(z)} \frac{a_{\n,m}(z)}{a_{\n,k}(z)}\frac{a_{\n^l,m}(z)}{a_{\n,m}(z)} = \frac{{\psi}^{(l)}_m(z)}{({\psi}^{(l)}_m)'(\infty)},
	 \]
	uniformly on each compact subset of $\C\setminus\Delta_m$ and \eqref{left*} follows for $k=0,\ldots,m-1$.  \hfill $\Box$

    The next result complements Theorem \ref{th:ratio}.

    \begin{corollary}
        Assume that the conditions of Theorem \ref{th:ratio} hold.  Let $(q_{\n,k}=\kappa_{\n,k}Q_{\n,k})_{k=1}^m$, $\n\in \Lambda,$ be the system of orthonormal polynomials defined in \eqref{orthonormal} and $(K_{\n,k})_{k=1}^m$, $\n\in\Lambda,$ the values given in \eqref{K_nk}. Then, for each fixed $k=1,\ldots,m$ we have
        \begin{align}
            \lim_{\vec n\in\Lambda}\frac{\kappa_{\n^l,k}}{\kappa_{\n,k}} =& \kappa_k\label{kappa},\\
            \lim_{\vec n\in\Lambda}\frac{K_{\n^l,k-1}}{K_{\n,k-1}} =& \kappa_k\cdots\kappa_m,\label{Knk_asym}
        \end{align}
        and
        \begin{equation}
            \label{q_kappa_F}
            \lim_{\vec n\in\Lambda} \frac{q_{\n^l,k}(z)}{q_{\n,k}(z)} = \kappa_k {\widetilde{F}^{(l)}_k(z)} ,
        \end{equation}
        uniformly on compact subsets of $\C\setminus\Delta_k$, where
        \begin{equation}
            \label{def_ck}
            \kappa_k = \frac{c_k}{\sqrt{c_{k-1}c_{k+1}}},\qquad
            c_k =  \left\{
            \begin{array}{ll}
                {F}_k^{(l)}(\infty), & k=1,\ldots,l-1,\\
                ({F}_k^{(l)})'(\infty), & k=l,\ldots,m,
            \end{array}
            \right.
        \end{equation}
        and $c_0=c_m=1$. We also have
        \begin{equation}
            \label{ratio_Anj}
            \lim_n \left|\frac{\mathcal{A}_{\n^l,k}(z)}{\mathcal{A}_{\vec{n},k}(z)}\right| = \frac{1}{\kappa_{k+1}^2\cdots \kappa_{m}^2}
            \left|\frac{\widetilde{F}^{(l)}_k(z)}{\widetilde{F}^{(l)}_{k+1}(z)}\right|, \qquad k=0,\ldots,m-1,
        \end{equation}
        uniformly on compact subsets of $\C\setminus(\Delta_k\cup\Delta_{k+1})$. When $k=0$, $\Delta_0=\emptyset$.
    \end{corollary}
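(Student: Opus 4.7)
My plan is to establish \eqref{kappa} first, since \eqref{Knk_asym}, \eqref{q_kappa_F} and \eqref{ratio_Anj} will all follow routinely from it together with Theorem \ref{th:ratio} and the auxiliary limits \eqref{limit_Qn_Qn} and \eqref{limitQuot_hnk} already obtained in the proof of Theorem \ref{th:ratio}.

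For \eqref{kappa}, observe that by \eqref{orthonormal} and \eqref{K_nk}, $\kappa_{\n,k}$ is precisely the leading coefficient of the $\eta_{\n,k}$-th orthonormal polynomial with respect to the varying measure $|h_{\n,k}(x)|\,|\D\sigma_{k}(x)|/|Q_{\n,k-1}(x)Q_{\n,k+1}(x)|$, and the analogous statement holds for $\kappa_{\n^l,k}$. By \eqref{limit_varying} the Radon--Nikodym ratio of the two measures converges uniformly on $\Delta_k$ to $1/|G_{k-1}(x)G_{k+1}(x)|$, whose Szeg\H{o} function on $\overline{\C}\setminus\Delta_k$ is precisely $\mathsf{S}_k$ from \eqref{ratio_S}. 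When $k=1,\ldots,l-1$ the two polynomials have the same degree, and the $L^2$-norm form of the relative asymptotic theorem \cite[Theorem 2]{DBG} yields $\kappa_{\n^l,k}/\kappa_{\n,k}\to\mathsf{S}_k(\infty)$; when $k=l,\ldots,m$ the degrees differ by one, and inserting the orthonormal polynomial of degree $\eta_{\n,k}$ with respect to the $\n^l$-measure, then combining the same relative asymptotic with the ratio asymptotic theorem \cite[Theorem 6]{Bernardo_Lago} for consecutive degrees (already invoked in the proof of Theorem \ref{th:ratio}), gives $\kappa_{\n^l,k}/\kappa_{\n,k}\to\mathsf{S}_k(\infty)\varphi_k'(\infty)$. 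In both regimes the limit equals $\sqrt{w_k}$ with $w_k$ as in \eqref{wk}, and the linear system \eqref{linear_sys} together with \eqref{ck} identifies $\sqrt{w_k}$ with $c_k/\sqrt{c_{k-1}c_{k+1}}=\kappa_k$, proving \eqref{kappa}.

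The remaining three formulas follow by short arguments. Since $K_{\n,m}=1$ and $\kappa_{\n,j}=K_{\n,j-1}/K_{\n,j}$, telescoping gives $K_{\n,k-1}=\prod_{j=k}^{m}\kappa_{\n,j}$, and dividing and applying \eqref{kappa} to each factor yields \eqref{Knk_asym}. For \eqref{q_kappa_F} I will factor
\begin{equation*}
\frac{q_{\n^l,k}(z)}{q_{\n,k}(z)} \;=\; \frac{\kappa_{\n^l,k}}{\kappa_{\n,k}}\cdot\frac{Q_{\n^l,k}(z)}{Q_{\n,k}(z)},
\end{equation*}
and apply \eqref{kappa} together with \eqref{left}. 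For \eqref{ratio_Anj}, the definition \eqref{Hnj} rearranges to $\mathcal{A}_{\n,k}=Q_{\n,k}\mathcal{H}_{\n,k}/Q_{\n,k+1}$, and \eqref{orthonormal} gives $\mathcal{H}_{\n,k}=h_{\n,k}/K_{\n,k}^{2}$; consequently
\begin{equation*}
\left|\frac{\mathcal{A}_{\n^l,k}(z)}{\mathcal{A}_{\n,k}(z)}\right| \;=\; \left|\frac{Q_{\n^l,k}(z)}{Q_{\n,k}(z)}\right|\cdot\frac{K_{\n,k}^{2}}{K_{\n^l,k}^{2}}\cdot\left|\frac{h_{\n^l,k}(z)}{h_{\n,k}(z)}\right|\cdot\left|\frac{Q_{\n,k+1}(z)}{Q_{\n^l,k+1}(z)}\right|,
\end{equation*}
and on every compact subset of $\C\setminus(\Delta_k\cup\Delta_{k+1})$ the four factors converge respectively to $|\widetilde{F}_k^{(l)}|$ by \eqref{left}, to $1/(\kappa_{k+1}\cdots\kappa_m)^{2}$ by \eqref{Knk_asym}, to $1$ by \eqref{limitQuot_hnk} extended off $\Delta_{k+1}$ via \eqref{lim_hnj}--\eqref{lim_hnj2}, and to $1/|\widetilde{F}_{k+1}^{(l)}|$ by \eqref{left} (with $\widetilde{F}_{m+1}^{(l)}\equiv 1$); their product is the required right-hand side.

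The hard part will be the first step. Pinning down $\lim\kappa_{\n^l,k}/\kappa_{\n,k}$ requires matching the two degree regimes $k<l$ and $k\geq l$ against distinct varying-measure asymptotic theorems, carefully tracking the Szeg\H{o}-function normalization employed throughout this section, and then verifying that both outcomes coalesce into $\sqrt{w_k}$ and thus into $c_k/\sqrt{c_{k-1}c_{k+1}}$ via the linear system \eqref{linear_sys} satisfied by the constants $c_k$. The subsequent steps are essentially bookkeeping.
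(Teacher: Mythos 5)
Your proposal is correct and follows essentially the same route as the paper: \eqref{kappa} is obtained by playing the orthonormal (leading-coefficient) versions of the varying-measure relative and ratio asymptotics from \cite{DBG} and \cite{Bernardo_Lago} against the monic limits \eqref{ratio_S} and \eqref{Qnk_Gk_SK} to get $\sqrt{w_k}=c_k/\sqrt{c_{k-1}c_{k+1}}$, then \eqref{Knk_asym} by telescoping, \eqref{q_kappa_F} by factoring, and \eqref{ratio_Anj} from the decomposition of $\mathcal{A}_{\n,k}$ via \eqref{Hnj} and \eqref{orthonormal} together with \eqref{left}, \eqref{Knk_asym} and the off-$\Delta_{k+1}$ convergence of $|h_{\n^l,k}/h_{\n,k}|$ supplied by \eqref{lim_hnj}--\eqref{lim_hnj2}. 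The only cosmetic difference is that you divide the algebraic factorizations of the forms directly where the paper divides their integral representations \eqref{Hnj_int}, which amounts to the same computation in view of \eqref{h_nk}.
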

    \begin{proof}
        From  \eqref{left} it follows that in place of \eqref{limit_varying} we can write
        \begin{equation*}
            %\label{limit_varying*}
            \lim_{\vec n\in\Lambda}\frac{|h_{\n^l,k}(x)|}{|h_{\n,k}(x)|} \frac{|Q_{\n,k-1}(x)Q_{\n,k+1}(x)|}{|Q_{\n^l,k-1}(x)Q_{\n^l,k+1}(x)|} =\frac{1}{| \widetilde{F}^{(l)}_{k-1}(x) \widetilde{F}^{(l)}_{k+1}(x)|}, \qquad k=1,\ldots,m.
        \end{equation*}
        By the same token, \eqref{ratio_S} and \eqref{Qnk_Gk_SK} hold with the limit taken along all $\Lambda$.

        With the same arguments that led to \eqref{ratio_S} and \eqref{Qnk_Gk_SK}, but in connection with orthonormal polynomials (see \cite{DBG} and \cite{Bernardo_Lago}) it follows that
        \begin{equation}
            \label{ortonormal}
            \lim_{\n\in \Lambda} \frac{q_{\n^l,k}(z)}{q_{\n,k}(z)} =
            \left\{
            \begin{array}{ll}
                {\mathsf{S}_k(z) }, & k=1,\ldots,l-1, \\
                 {\mathsf{S}_k(z)\varphi_k(z)}, & k=l,\ldots,m.
            \end{array}
            \right.
        \end{equation}
        uniformly on compact subsets of $\C\setminus\Delta_j$. Now, dividing \eqref{ortonormal} by \eqref{ratio_S} or \eqref{Qnk_Gk_SK}, we obtain
        \begin{equation*}
            \lim_n \frac{\kappa_{\n^l,k}}{\kappa_{\n,k}} = \sqrt{w_k} = \frac{c_k}{\sqrt{c_{k-1}c_{k+1}}} := \kappa_k,
        \end{equation*}
        where $w_k$ is given by \eqref{wk} and the $c_k$ are the normalizing constants found solving the linear system of equations \eqref{linear_sys} whose values were given in \eqref{ck}.  Therefore, formulas \eqref{kappa} and \eqref{def_ck} take place. Now, \eqref{Knk_asym} follows from \eqref{kappa} because
        \[ \frac{K_{\n^l,k-1}}{K_{\n,k-1}} = \frac{\kappa_{\n^l,k}\cdots\kappa_{\n^l,m}}{\kappa_{\n,k}\cdots\kappa_{\n,m}}
        \]
        and \eqref{q_kappa_F} from \eqref{kappa} and \eqref{left} since
        \[ \frac{q_{\n^l,k}}{q_{\n,k}} = \frac{\kappa_{\n^l,k}Q_{\n^l,k}}{\kappa_{\n,k}Q_{\n,k}}.
        \]

        From \eqref{Hnj}, \eqref{Hnj_int}, and  \eqref{orthonormal}, we deduce
        \begin{equation*}
            \mathcal{A}_{\vec{n},k}(z) = \frac{1}{K^2_{\n,k}}\frac{Q_{\n,k}(z)}{Q_{\n,k+1}(z)}\int \frac{q^2_{\n,k+1}}{z-x}\frac{h_{\n,k+1}(x)\D\sigma_{k+1}(x)}{Q_{\n,k}(x)Q_{\n,k+2}(x)},\qquad k=0,\ldots,m-1,
        \end{equation*}
        and similarly
        \begin{equation*}
            \mathcal{A}_{\vec{n}^l,k}(z) = \frac{1}{K^2_{\n^l,k}}\frac{Q_{\n^l,k}(z)}{Q_{\n^l,k+1}(z)}\int \frac{q^2_{\n^l,k+1}}{z-x}\frac{h_{\n^l,k+1}(x)\D\sigma_{k+1}(x)}{Q_{\n^l,k}(x)Q_{\n^l,k+2}(x)},\qquad k=0,\ldots,m-1,
        \end{equation*}
        Dividing the second expression by the first, taking absolute values,  and the limit over $\n \in \Lambda$ from  \eqref{left}, \eqref{Knk_asym}, \eqref{h_nk},   and  \eqref{limitQuot_hnk}, formula \eqref{ratio_Anj} readily follows.
    \end{proof}


\begin{thebibliography}{99}

\bibitem{ALR} A.I.  Aptekarev,  G.  L\'opez  Lagomasino,  and  I.A.  Rocha.  Ratio  asymptotic  of Hermite-Pad\'e orthogonal polynomials for Nikishin systems.  Sb. Math. \textbf{196} (2005), 1089–1107.

\bibitem{DBG} D. Barrios Rolanía, B. de la Calle Ysern, and G. L\'opez Lagomasino. Ratio and relative asymptotic of polynomials orthogonal to varying Denisov-type measures.  J. of Approx. Theory \textbf{139} (2006), 223–256.

\bibitem{BSJ} R. Beals, D. H. Sattinger, and J. Szmigielski.  Multipeakons and the classical moment problem, Adv. Math. \textbf{154}  (2000), 229–257.

\bibitem{BB} Bertola, M., Bothner, T. Universality Conjecture and Results for a Model of Several Coupled Positive-Definite Matrices. Commun. Math. Phys. \textbf{337} (2015), 1077–1141.

\bibitem{Bertola:CBOPs} M.~Bertola, M.~Gekhtman, and J.~Szmigielski. Cauchy biorthogonal polynomials.
J. Approx. Theory {\bf 162} (2010), 832-867.

\bibitem{BGS} M. Bertola, M. Gekhtman, and J. Szmigielski. Strong asymptotics for Cauchy biorthogonal polynomials with application to the Cauchy two–matrix model. J. Math. Physics {\bf 54} (2013), 043517.

\bibitem{Bust_Lago} J. Bustamante and G. L\'opez Lagomasino. Hermite-Pad\'e approximation of Nikishin systems of analytic functions.  Sb. Math. \textbf{77} (1994), 367–384.

\bibitem{Bernardo_Lago} B. de la Calle Ysern and G. L\'opez Lagomasino. Weak convergence of varying measures and Hermite-Pad\'e orthogonal polynomials.  Const. Approx. \textbf{15} (1999), 553–575.

\bibitem{Car} T. Carleman. Les Fonctions Quasi-analytiques. Gauthier-Villars. Paris. 1926.

\bibitem{ulises_lago_2} U. Fidalgo Prieto and G. L\'opez Lagomasino. Nikishin systems are perfect. Case of unbounded and touching support.  J. of Approx. Theory \textbf{163} (2011), 779-811.

\bibitem{Lago_Sergio_Ulises} U. Fidalgo Prieto, G. L\'opez Lagomasino, and S. Medina Peralta. Asymptotic of Cauchy biorthogonal polynomials.  Mediterr. J. Math. (2020) 17: 22.

\bibitem{gonchar} A.A. Gonchar. On the convergence of generalized Pad\'e approximant of meromorphic functions. Math. USSR Sb. \textbf{27} (1975), 503–514.

\bibitem{Krein_Nudelman} M.G. Krein and A.A. Nudel’man.  The Markov Moment Problem and Extremal Problems. Trans. Math. Monogr. \textbf{50}. Amer. Math. Soc., Providence, R. I., 1997.

\bibitem{miranda} R. Miranda.  Algebraic Curves and Riemann Surfaces. Graduate Studies in Mathematics \textbf{5}. Amer. Math. Soc., Providence, R. I., 1995.

\bibitem{lago79} G. L\'opez Lagomasino. Conditions for the convergence of multipoint Pad\'e approximants for functions of Stieltjes type.  Math. USSR Sb., \textbf{35}, (1979), 363-376.


\bibitem{Lago87} G. L\'opez Lagomasino. On the asymptotic of the ratio of orthogonal polynomials and convergence of multipoint Pad\'e approximants.  Math. USSR Sb. \textbf{56} (1987), 207–220.

\bibitem{Lago89} G. L\'opez Lagomasino. Convergence of Pad\'e approximants of Stieltjes type meromorphic functions and comparative asymptotic of orthogonal polynomials.  Math. USSR Sb. \textbf{64} (1989), 207–227.
    
\bibitem{lago2} G. L\'opez Lagomasino. An introduction to multiple orthogonal polynomials and Hermite-Pad\'e approximation.  Orthogonal Polynomials: Current Trends and Applications, Proceedings of the 7th EIBPOA Conference, F. Marcell\'an and E.J. Huertas Eds., SEMA SIMAI Springer Series \textbf{22}, 237-271.

\bibitem{Lago_Sergio} G. L\'opez Lagomasino and S. Medina Peralta. On the convergence of type \textsc{i} Hermite-Pad\'e approximants.  Adv. Math. \textbf{273} (2015), 124–148.

\bibitem{Lago_Sergio_Jacek} G. L\'opez Lagomasino, S. Medina Peralta, and J. Szmigielski. Mixed type Hermite-Pad\'e approximation inspired by the Degasperis-Procesi equation.  Adv. Math. \textbf{349} (2019), 813–838.

\bibitem{Lysov} V.G. Lysov. Mixed type Hermite-Pad\'e approximants for a Nikishin System. Proc. Steklov Inst. of Math.  {\bf  311} (2020), 199–213.

\bibitem{Mahler} K. Mahler. Perfect systems. Compos. Math. \textbf{19} (1968), 95–166.

\bibitem{markov} A.A. Markov. Deux demonstrations de la convergence de certains fractions continues. Acta Math. \textbf{19} (1895), 93–104.

\bibitem{Lund} H. Lundmark, J. Szmigielski. Degasperis-Procesi peakons and the discrete cubic string. Int. Math. Res. Pap. 2 (2005).

\bibitem{Nevai} P. Nevai. Weakly convergent sequences of functions and orthogonal polynomials. J. Approx. Theory \textbf{65} (1991), 322–340.

\bibitem{nikishin} E.M. Nikishin. On simultaneous Pad\'e approximants.  Math. USSR Sb. \textbf{41} (1982), 409–425.

\bibitem{Rak1} E.A. Rakhmanov. On the asymptotic of the ratio of orthogonal polynomials.
Math. USSR Sb. \textbf{32} (1977), 199–213.

\bibitem{Rak2} E.A. Rakhmanov. On the asymptotic of the ratio of orthogonal polynomials
II. Math. USSR Sb. \textbf{46} (1983), 105–117.

\bibitem{Rak3} E.A. Rakhmanov.  On asymptotic properties of orthogonal polynomials on the
unit circle with weights not satisfying Szeg\H{o}’s condition. Math. USSR Sb. \textbf{58} (1987),
149–167. 

\bibitem{Stahl_Totik} H. Stahl and V. Totik.  General Orthogonal Polynomials. Encyclopedia of Mathematics and its Applications \textbf{43}. Cambridge University Press, Cambridge, UK, 1992.

\bibitem{stieltjes} T.J.  Stieltjes.  Recherches  sur les  fractions  continues. Ann.  Fac.  Sci. Univ. Toulouse 1re s\'erie \textbf{8} (1894), J1–J122.

\end{thebibliography}
\end{document}